\newcommand{\N}{\mathbb{N}}
\newcommand{\emptyWord}{\varepsilon}
\newcommand{\step}[1]{\to_{#1}}
\newcommand{\Powerset}[1]{2^{#1}}
\newcommand{\upclosure}[2]{{#2}{\uparrow_{#1}}}
\begin{document}

\title{Rational subsets and submonoids of wreath products  \thanks{This work was supported by the DAAD
research project RatGroup. The second author was partially supported by a grant from the Simons Foundation (\#245268 to Benjamin Steinberg).}}

\author{Markus Lohrey\inst{1} \and Benjamin Steinberg\inst{2} \and Georg Zetzsche\inst{3}}

\institute{Institut f\"ur Informatik, Universit\"at Leipzig,
  Germany\and
  City College of New York\and
  Technische Universit\"at Kaiserslautern \\ \email{zetzsche@cs.uni-kl.de}}

\maketitle

\begin{abstract}
It is shown that membership in rational subsets of wreath products
$H \wr V$ with $H$ a finite group and $V$ a virtually free group
is decidable. On the other hand, it is shown that there exists a
fixed finitely generated submonoid in the wreath product $\mathbb{Z}\wr\mathbb{Z}$ with
an undecidable membership problem.
\end{abstract}

\section{Introduction}

The study of algorithmic problems in group theory has a long tradition. Dehn, in his seminal paper
from 1911~\cite{Dehn11}, introduced the word problem (Does a given word over the generators
represent the identity?), the conjugacy problem (Are two given group elements conjugate?)
and the isomorphism problem
(Are two given finitely presented groups isomorphic?), see~\cite{LySch77} for
general references in combinatorial group theory.
Starting with the work of Novikov and Boone from the 1950's, all three problems were shown to be
undecidable for finitely presented groups in general.
A generalization of the word problem is the {\em subgroup membership
problem} (also known as the {\em generalized word problem}) for finitely generated groups: Given
group elements $g, g_1, \ldots, g_n$, does $g$ belong to the subgroup generated by $g_1, \ldots, g_n$?
Explicitly, this problem was introduced by Mihailova in 1958, although Nielsen had already presented
an algorithm for the subgroup membership problem for free groups in his paper from 1921~\cite{Nie21}.

Motivated partly by automata theory, the subgroup membership
problem was further generalized to {\em the rational subset membership problem}.
Assume that the group $G$ is finitely generated by the set $X$ (where $a \in X$ if and only if
$a^{-1} \in X$). A finite automaton $A$ with transitions labeled by
elements of $X$ defines a subset $L(A) \subseteq G$ in the natural
way; such subsets are the rational subsets of $G$. The rational subset membership
problem asks whether a given group element belongs to $L(A)$ for a
given finite automaton (in fact, this problem makes sense for any
finitely generated monoid). The notion of a rational subset in a monoid
can be traced back to the work of Eilenberg and Sch\"utzenberger from 1969~\cite{EiSchu69}.
Other early references are~\cite{Ani71,Gilman96}.
Rational subsets of groups also found applications for the solution of word equations
(here, quite often the term rational constraint is used)~\cite{DiMu06,LohSen06icalp}.
In automata theory, rational subsets are tightly related to valence automata:
For any group $G$, the emptiness problem for valence automata over $G$ (which are
also known as $G$-automata) is decidable if and only if $G$ has a decidable
rational subset membership problem. See~\cite{FeSt02,Kam09,KaSiSt06} for
details on valence automata and $G$-automata.

For free groups, Benois~\cite{Benois69} proved that the rational subset membership
problem is decidable using a classical automaton saturation procedure
(which yields a polynomial time algorithm). For commutative groups,
the rational subset membership can be solved using integer programming.
Further (un)decidability results on the rational subset membership problem
can be found in~\cite{LohSte08} for right-angled Artin groups,
in~\cite{Rom99} for nilpotent groups, and in~\cite{LohSt09tocs} for metabelian groups.
In general, groups with a decidable rational subset membership problem
seem to be rare. In~\cite{LohSt10} it was shown that if the group $G$ has
at least two ends, then the rational subset membership problem for $G$
is decidable if and only if the submonoid membership problem for $G$
(Does a given element of $G$ belong to a given finitely generated submonoid of $G$?)
is decidable.

In this paper, we investigate the rational subset membership problem for
wreath products. The wreath product is a fundamental operation in
group theory. To define the wreath product $H \wr G$ of
two groups $G$ and $H$, one first takes the direct sum
$K = \bigoplus_{g \in G} H$ of copies of $H$, one for each element of $G$.
An element $g \in G$ acts on $K$  by permuting the $G$-copies
of $H$  according to the left action of $g$ on $G$.
The corresponding semidirect product is the wreath product $H \wr G$.

In contrast to the word problem,
decidability of the rational subset membership problem is not preserved
under  wreath products. For instance, in~\cite{LohSt09tocs} it was shown that for every
nontrivial group $H$, the rational subset membership problem for
$H \wr (\mathbb{Z} \times \mathbb{Z})$ is undecidable.
The proof uses an encoding of a tiling problem, which uses the grid
structure of the Cayley graph of $\mathbb{Z} \times \mathbb{Z}$.

In this paper, we prove the following two new results concerning the rational subset membership problem
and the submonoid membership problem for
wreath products:
\begin{enumerate}[label=(\roman*)]
\item The submonoid membership problem is undecidable for
$\mathbb{Z} \wr \mathbb{Z}$. The wreath product $\mathbb{Z} \wr \mathbb{Z}$ is one of the simplest
examples of a finitely generated group that is not finitely presented, see \cite{DaOl11,Clea06} for further results showing
the importance of $\mathbb{Z} \wr \mathbb{Z}$.
\item For every finite group $H$ and every virtually free group\footnote{Recall that a group is virtually
free if it has a free subgroup of finite index.} $V$, the group
$H \wr V$ has a decidable rational subset membership problem; this includes
for instance the famous lamplighter group
$\mathbb{Z}_2 \wr \mathbb{Z}$.
\end{enumerate}
For the proof of (i) we encode the acceptance problem for a 2-counter machine
(Minsky machine~\cite{Min67}) into the submonoid membership problem for
$\mathbb{Z} \wr \mathbb{Z}$. One should remark that $\mathbb{Z} \wr
\mathbb{Z}$ is a finitely generated metabelian group and hence has
a decidable subgroup membership problem~\cite{Rom74,Rom80}.
For the proof of (ii), an automaton saturation procedure is used.  The
termination of the process is guaranteed by a well-quasi-order (wqo) which
refines the classical subsequence wqo considered by Higman~\cite{Hig52}.

Wqo theory has also been applied successfully
for the verification of
infinite state systems. This research led to the notion of
well-structured transition  systems~\cite{FinkelS01}. An application in formal
language theory is the decidability of the membership problem for
leftist grammars~\cite{MotwaniPSV00}.
Usually, a disadvantage of using wqo theory is that it does not yield
algorithms with good complexity bounds.
In the context of well-structured transition systems, several natural reachability problems (e.g. for lossy
channel systems) were shown to be not primitive recursive~\cite{ChambartS07,Schnoebelen02}. Also the  membership problem for
leftist grammars was shown be not primitive recursive~\cite{Jurdzinski08}.
The complexity status for the rational subset membership problem
for wreath products $H \wr V$ ($H$ finite, $V$ virtually free) remains
open. Actually, we do not even know whether the rational subset
membership problem for the lamplighter group  $\mathbb{Z}_2 \wr
\mathbb{Z}$ is primitive recursive.

As mentioned earlier, the rational subset membership problem is undecidable
for every wreath product $H \wr (\mathbb{Z} \times \mathbb{Z})$, where $H$ is a nontrivial group.
We conjecture that this can be generalized to the following result:
For every nontrivial group $H$ and every non-virtually free group
$G$, the rational subset membership problem for $H \wr G$ is undecidable.
The reason is that the undecidability proof for $H \wr (\mathbb{Z} \times \mathbb{Z})$
~\cite{LohSt09tocs} only uses the grid-like structure of the Cayley graph of
$\mathbb{Z} \times \mathbb{Z}$. In~\cite{KuLo04annals} it was shown that the Cayley graph of
a group $G$ has bounded tree width if and only if the group is virtually free.
Hence, if $G$ is not virtually free, then the Cayley-graph of $G$ has unbounded
tree width, which means that finite grids of arbitrary size appear as minors
in the Cayley-graph of $G$. One might therefore hope to reduce again a tiling
problem into the rational subset membership problem for $H \wr G$ (for $H$ non-trivial
and $G$ not virtually free).

Our decidability result for the rational subset membership problem for wreath products
$H \wr V$ with  $H$  finite  and $V$  virtually free can be also interpreted in terms
of tree automata with additional data values. Consider a tree walking automaton operating
on infinite rooted trees. Every tree node contains an additional data value from a finite group such that
all but finitely many nodes contain the group identity.
Besides navigating in the tree, the tree automaton  can multiply (on the right) the group element from
the current tree node with another group element (specified by the transition). The automaton cannot
read the group element from the current node.
Our decidability  result basically says that reachability
for this automaton model is decidable.

\section{Rational subsets of groups}

Let $G$ be a finitely generated group and $X$ a finite symmetric generating set for $G$
(symmetric means that $X$ is closed under taking inverses).
For a subset $B \subseteq G$ we denote with $B^*$ the {\em submonoid} of $G$
generated by $B$. The subgroup generated by $B$ is $\langle B \rangle$.
The set  of {\em rational subsets} of $G$ is the smallest set that (i) contains
all finite subsets of $G$ and (ii) that is closed under union, product, and $^*$.
Alternatively, rational subsets can be represented by finite automata.
Let $A = (Q,G,E,q_0,Q_F)$ be a finite automaton, where transitions are labeled with elements of $G$:
$Q$ is the finite set of states, $q_0 \in Q$ is the initial state, $Q_F \subseteq Q$ is the set of final
states, and $E \subseteq Q \times G \times Q$ is a finite set of transitions.
Every transition label $g \in G$ can be represented by a finite word over the generating
set $X$. In this way, $A$ becomes a finite object.
The subset $L(A) \subseteq G$ accepted by $A$ consists of all group elements
$g_1 g_2 g_3 \cdots g_n$ such that there exists a sequence of transitions
$(q_0, g_1, q_1), (q_1, g_2, q_2), (q_2, g_3, q_3), \ldots, (q_{n-1}, g_n,q_n) \in E$ with
$q_n \in Q_F$.
The {\em rational subset membership problem} for $G$ is the following decision problem:

\smallskip
\noindent
INPUT: A finite automaton $A$ as above and an element $g \in G$.

\noindent
QUESTION: Does $g \in L(A)$ hold?

\smallskip
\noindent
Since $g \in L(A)$ if and only if $1_G \in L(A)g^{-1}$, and
$L(A)g^{-1}$ is rational, too, the rational subset membership problem for $G$
is equivalent to the question of deciding whether a given automaton accepts the group identity.

The {\em submonoid membership problem} for $G$ is the following decision problem:

\smallskip
\noindent
INPUT: Elements $g, g_1, \ldots, g_n \in G$.

\noindent
QUESTION: Does $g \in \{g_1, \ldots, g_n\}^*$ hold?

\smallskip
\noindent
Clearly, decidability of the  rational subset membership problem for $G$
implies decidability of the submonoid membership problem for $G$. Moreover,
the latter generalizes the classical subgroup membership problem for $G$ (also
known as the generalized word problem), where the input is the same as for
the submonoid membership problem for $G$ but it is asked whether
$g \in \langle g_1, \ldots, g_n\rangle$ holds.

In our undecidability results in Section~\ref{sec-undec}, we will actually consider the non-uniform  variant
of the submonoid membership problem, where the submonoid is fixed, i.e., not part of the input.

\section{Wreath products}

Let $G$ and $H$ be groups.
Consider the direct sum
\[K = \bigoplus_{g \in G} H_g,\]
where $H_g$ is a copy
of $H$. We view $K$ as the set
$H^{(G)}=\{ f \in H^G \mid f^{-1}(H \setminus\{1_H\})\ \text{is finite}\}$ of all mappings
from $G$ to $H$ with finite support together with pointwise multiplication as
the group operation. The group $G$ has a natural left action on $H^{(G)}$ given by 
\[gf(a) = f(g^{-1}a)\]
where
$f \in H^{(G)}$ and $g, a \in G$.
The corresponding semidirect product $H^{(G)} \rtimes G$ is the wreath product $H \wr G$.
In other words:
\begin{itemize}
\item
Elements of $H \wr G$ are pairs $(f,g)$, where $g \in G$ and
$f \in H^{(G)}$.
\item
The multiplication in $H \wr G$ is defined as follows:
Let $(f_1,g_1), (f_2,g_2) \in H \wr G$. Then
$(f_1,g_1)(f_2,g_2) = (f, g_1g_2)$, where
$f(a) = f_1(a)f_2(g_1^{-1}a)$.
\end{itemize}
The following intuition might be helpful:
An element $(f,g) \in H\wr G$ can be thought of
as a finite multiset of elements of $H \setminus\{1_H\}$ that are sitting at certain
elements of $G$ (the mapping $f$) together with the distinguished
element $g \in G$, which can be thought of as a cursor
moving in $G$.
If we want to compute the product $(f_1,g_1) (f_2,g_2)$, we do this
as follows: First, we shift the finite collection of $H$-elements that
corresponds to the mapping $f_2$ by $g_1$: If the element $h \in H\setminus\{1_H\}$ is
sitting at $a \in G$ (i.e., $f_2(a)=h$), then we remove $h$ from $a$ and
put it to the new location $g_1a \in H$. This new collection
corresponds to the mapping $f'_2 \colon  a \mapsto f_2(g_1^{-1}a)$.
After this shift, we multiply the two collections of $H$-elements
pointwise: If in $a \in G$ the elements $h_1$ and $h_2$ are sitting
(i.e., $f_1(a)=h_1$ and $f'_2(a)=h_2$), then we put the product
$h_1h_2$ into the location $a$. Finally, the new distinguished
$G$-element (the new cursor position) becomes $g_1 g_2$.

\begin{proposition}\label{finiteindex}
Let $K$ be a subgroup of $G$ of finite index $m$ and let $H$ be a group.  Then $H^m\wr K$ is isomorphic to a subgroup of index $m$ in $H\wr G$.
\end{proposition}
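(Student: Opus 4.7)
The plan is to exhibit $H^{(G)} \rtimes K$ as the desired index-$m$ subgroup of $H \wr G = H^{(G)} \rtimes G$, and then produce an explicit isomorphism with $H^m \wr K = (H^m)^{(K)} \rtimes K$.

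First I would verify the index claim. Using the product formula from the paper, $(f_1,g_1)^{-1}(f_2,g_2)$ has second component $g_1^{-1}g_2$, so two elements of $H \wr G$ lie in the same left coset of $H^{(G)} \rtimes K$ exactly when $g_1 K = g_2 K$. Hence the cosets of $H^{(G)} \rtimes K$ are in bijection with $G/K$, giving index $m$.

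Next I would construct the isomorphism. Fix a right transversal $t_1,\dots,t_m$ for $K$ in $G$, so that $G = \bigsqcup_{i=1}^{m} K t_i$ and every $a \in G$ has a unique decomposition $a = k t_i$. Define $\Phi \colon H^{(G)} \to (H^m)^{(K)}$ by $\Phi(f)(k) = (f(kt_1),\dots,f(kt_m))$. Since the sets $Kt_i$ partition $G$, the map $\Phi$ is a bijection, and it is clearly a group homomorphism (pointwise multiplication on each side). The main point to check is $K$-equivariance with respect to the left actions used in the wreath product construction: for $k \in K$ one computes
\[
\Phi(kf)(k')_i = (kf)(k't_i) = f(k^{-1}k't_i) = \Phi(f)(k^{-1}k')_i = (k\cdot\Phi(f))(k')_i,
\]
where the second equality uses $k^{-1}k' \in K$ so that $k^{-1}k't_i$ still lies in $Kt_i$ with its $K$-coordinate $k^{-1}k'$ well defined by the uniqueness of the decomposition. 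Thus $\Phi$ intertwines the two $K$-actions.

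Finally I would assemble the isomorphism of semidirect products $\Psi \colon H^{(G)} \rtimes K \to (H^m)^{(K)} \rtimes K$ by $\Psi(f,k) = (\Phi(f),k)$; equivariance of $\Phi$ ensures that $\Psi$ respects the wreath-product multiplication. The main obstacle is essentially a bookkeeping one: one must be careful to choose right cosets (not left) because the defining action $gf(a) = f(g^{-1}a)$ is by left multiplication on the argument, and it is precisely this choice that makes $K$-translation preserve each ``stripe'' $Kt_i$ and thereby permit the desired coordinate-wise identification $H^{(G)} \cong (H^m)^{(K)}$.
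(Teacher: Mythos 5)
Your proposal is correct and follows essentially the same route as the paper: the paper also identifies the subgroup as $H^{(G)}\rtimes K$, defines the same $K$-equivariant isomorphism $\alpha\colon H^{(G)}\to (H^T)^{(K)}$ via $[\alpha(f)(k)](t)=f(kt)$ for a right transversal $T$, and verifies equivariance by the identical computation. The only cosmetic difference is that the paper establishes the index by exhibiting the explicit coset representatives $(e,t)$, $t\in T$, whereas you argue via the coset criterion on second components; both are immediate.
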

\begin{proof}
Let $T$ be a set of right coset representatives for $G/K$; it has $m$ elements.
The action of $G$ on $H^{(G)}$ restricts to an action of $K$ on $H^{(G)}$ and so $H^{(G)}\rtimes K$ is a subgroup of $H\wr G$.
There is a $K$-equivariant\footnote{A $K$-equivariant group isomorphism $\alpha\colon H^{(G)}\to (H^T)^{(K)}$
is an isomorphism that commutes with the action of $K$: $k\alpha(f) = \alpha(kf)$.} group isomorphism $\alpha\colon H^{(G)}\to (H^T)^{(K)}$ given by $[\alpha(f)(k)](t) = f(kt)$,
where $f \in H^{(G)}$, $k \in K$, and $t \in T$.
This $\alpha$ is indeed bijective; the inverse $\alpha^{-1}$ is given by 
$[\alpha^{-1}(f)](kt) =[f(k)](t)$ for $f \in (H^T)^{(K)}$,
$k\in K$, and $t\in T$ (which has finite support because $T$ is finite and $f$ has finite support).
That $\alpha$ is $K$-equivariant follows from
\[[k\alpha(f)(k')](t)=[\alpha(f)(k^{-1}k')](t)=f(k^{-1}k't) = [kf](k't) = [\alpha(kf)(k')](t).\] 
It follows that $H^m\wr K\cong (H^T)^{(K)}\rtimes K\cong H^{(G)}\rtimes K$.

It thus remains to prove that $H^{(G)}\rtimes K$ has index $m$ in $H\wr G$.  Indeed,
let $e \in H^{(G)}$ be the map sending all of $G$ to the identity of $H$.  Then the elements of
the form $(e,t)$ with $t\in T$ form a set of right coset representatives of $H^{(G)}\rtimes K$ in $H\wr G$.  Indeed, it is easy to see that
these elements are in distinct cosets. If $g=kt$ with $k\in K$ and $t\in T$, then $(f,g) = (f,k)(e,t)$, which is in the coset of $(e,t)$.
\qed
\end{proof}

\section{Decidability} \label{sec-dec}

We show that the rational subset membership problem is decidable for groups
$G=H\wr V$, where $H$ is finite and $V$ is virtually free.
First, we will show that the rational subset membership problem for
$G=H\wr F_2$, where $F_2$ is the free group generated by $a$ and $b$,
is decidable. For this we make use of a particular well-quasi-order.

\subsection{A well-quasi-order} \label{sec-wqo}
Recall that a {\em well-quasi-order} on a set $A$ is a reflexive and transitive relation $\preceq$
such that for every infinite sequence $a_1, a_2, a_3,\ldots$ with $a_i \in A$ there exist
$i < j$ such that $a_i \preceq a_j$. In this paper, $\preceq$ will be always antisymmetric
as well; so $\preceq$ will be a well partial order.

For a finite alphabet $X$ and two words $u,v \in X^*$, we write
$u\preceq v$ if there exist $v_0,\ldots,v_n\in X^*$, $u_1,\ldots,u_n\in X$
such that $v=v_0u_1v_1\cdots u_nv_n$ and $u=u_1\cdots u_n$.
The following theorem was shown by Higman \cite{Hig52} (and independently Haines \cite{Hai69}).
\begin{theorem}[Higman's Lemma]
The order $\preceq$ on $X^*$ is a well-quasi-order.
\end{theorem}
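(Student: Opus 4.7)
\medskip
\noindent\textbf{Proof plan.}
The plan is to use the Nash-Williams minimal bad sequence argument. Assume for contradiction that $\preceq$ is not a well-quasi-order, so there exists a \emph{bad sequence}, i.e., an infinite sequence $w_1, w_2, w_3, \ldots$ of words in $X^*$ such that $w_i \not\preceq w_j$ for all $i<j$. Among all bad sequences I would choose one that is minimal in the following sense: for each $n\geq 1$, assuming $w_1,\ldots,w_{n-1}$ are already chosen, I pick $w_n$ so that $|w_n|$ is minimum among the lengths of first elements of all bad sequences starting with the prefix $w_1,\ldots,w_{n-1}$. Such a choice is possible by well-foundedness of $\N$ and the assumption that bad sequences exist.

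Each $w_i$ must be non-empty, since $\emptyWord \preceq v$ for every $v\in X^*$ and so an empty $w_i$ would immediately violate badness. Hence I can write $w_i = x_i u_i$ with $x_i \in X$ and $u_i \in X^*$. Since $X$ is finite, some letter $x\in X$ occurs as $x_i$ infinitely often; let $i_1<i_2<\cdots$ enumerate those indices. Now consider the sequence
\[
w_1,\ w_2,\ \ldots,\ w_{i_1-1},\ u_{i_1},\ u_{i_2},\ u_{i_3},\ \ldots
\]
Its $i_1$-th entry is $u_{i_1}$, which is strictly shorter than $w_{i_1}$. By the minimality of the original choice at position $i_1$, this new sequence cannot be bad: either there is $i<j<i_1$ with $w_i\preceq w_j$ (impossible, since the original sequence is bad), or there is $i<i_1\leq j$ with $w_i \preceq u_{i_j}$, or there is $i_1\leq i<j$ with $u_{i_s}\preceq u_{i_t}$ for the corresponding indices $s<t$.

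In the second case, $u_{i_j}$ is a subword of $w_{i_j}$ in the sense that $w_{i_j} = x u_{i_j}$, so $u_{i_j}\preceq w_{i_j}$, and hence $w_i \preceq w_{i_j}$ by transitivity, contradicting badness of the original sequence. In the third case, from $u_{i_s}\preceq u_{i_t}$ and $x_{i_s}=x_{i_t}=x$ one immediately obtains $w_{i_s} = x u_{i_s} \preceq x u_{i_t} = w_{i_t}$, again contradicting badness. Thus all cases lead to a contradiction, so no bad sequence can exist, proving that $\preceq$ is a well-quasi-order. The only real subtlety is setting up the minimal bad sequence correctly and verifying that the embedding of $u_{i_j}$ into $w_{i_j}$ lifts through transitivity; the pigeonhole step on the first letter is what makes the single-letter alphabet case of the induction work and drives the whole argument.
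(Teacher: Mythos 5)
Your proof is correct: it is the standard Nash--Williams minimal bad sequence argument, and all the steps check out — the minimal choice of $w_n$ by length at each stage, the pigeonhole on the first letter of a finite alphabet, and the three-case analysis of why the modified sequence $w_1,\ldots,w_{i_1-1},u_{i_1},u_{i_2},\ldots$ cannot be good without contradicting badness of the original (using $u_{i_j}\preceq w_{i_j}$ together with transitivity in the mixed case, and $u\preceq v\Rightarrow xu\preceq xv$ in the tail case). Note, however, that the paper does not prove this statement at all: it is quoted as a classical theorem with citations to Higman and Haines, so there is no ``paper proof'' to compare against. Your argument is therefore a self-contained replacement for the citation, and it is the shortest standard route; Higman's original proof and some textbook treatments instead induct on the structure of the order or on the alphabet, which is what your closing remark seems to allude to — but be aware that no induction on $|X|$ actually occurs in the argument you wrote, so that final sentence slightly misdescribes your own proof (the pigeonhole step is not the base case of an induction, it is just what lets you prepend a common letter $x$ to lift $u_{i_s}\preceq u_{i_t}$ back to $w_{i_s}\preceq w_{i_t}$). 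One small point of rigor you could make explicit: the construction of the minimal bad sequence uses dependent choice over the nonempty sets of bad continuations, and the claim that a length-minimal continuation exists at each stage follows from well-foundedness of $\N$, as you say.
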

Let $G$ be a group.
For a monoid morphism $\alpha\colon X^*\to G$ and $u,v \in X^*$
let $u\preceq_\alpha v$ if there is a factorization $v=v_0u_1v_1\cdots u_nv_n$ with
$v_0,\ldots,v_n\in X^*$, $u_1,\ldots,u_n\in X$, $u=u_1\cdots u_n$, and
$\alpha(v_i)=1$ for $0\le i\le n$. It is easy to see that $\preceq_\alpha$ is indeed
a partial order on $X^*$.
Furthermore, let $\preceq_G$ be the partial order on $X^*$ with $u\preceq_G v$ if
$v=v_0u_1v_1\cdots u_nv_n$ for some $v_0,\ldots,v_n\in X^*$, $u_1,\ldots,u_n\in
X$, and $u=u_1\cdots u_n$ such that $\alpha(v_i)=1$ for every morphism $\alpha\colon
X^*\to G$ and $0\le i\le n$. 
Note that if $G$ is finite, there are only finitely many morphisms
$\alpha\colon X^*\to G$. This means that for given $X$ and $G$, we can
construct a finite automaton for the upward closure $U\subseteq X^*$ of
$\{\emptyWord\}$ with respect to $\preceq_G$. Since for $w=w_1\cdots w_n$,
$w_1,\ldots,w_n\in X$, the upward closure of $w$ equals $Uw_1\cdots Uw_nU$, we
can also construct a finite automaton for the upward closure of any given
singleton provided that $G$ is finite.  In the latter case, we can also show
that $\preceq_G$ is a well-quasi-order:

\begin{lemma}
Let $G$ be a group. Then the following statements are equivalent:
\begin{enumerate}[label=(\roman*)]
\item\label{st:wqo} $(X^*, \preceq_G)$ is a well-quasi-order for each finite alphabet $X$.
\item\label{st:bounded} For every $n\in\N$, there is a $k\in\N$ with
$|\langle g_1,\ldots,g_n\rangle|\le k$ for all $g_1,\ldots,g_n\in G$.
\end{enumerate}
\end{lemma}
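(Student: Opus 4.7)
The plan is to introduce the relatively free group $F_V(X)$ of the variety $V$ of groups generated by $G$, with canonical morphism $\eta\colon X^*\to F_V(X)$. Then $U:=\eta^{-1}(1)$ is precisely the set of words $w$ with $\alpha(w)=1$ for \emph{every} morphism $\alpha\colon X^*\to G$. The starting observation is that $u\preceq_G v$ forces $\eta(u)=\eta(v)$: in a witnessing factorization $v=v_0u_1v_1\cdots u_nv_n$ each $v_i\in U$, so $\eta(v)=\eta(u_1\cdots u_n)=\eta(u)$. This drives both directions of the equivalence.

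For (i)$\Rightarrow$(ii) I argue contrapositively. If (ii) fails at some rank $n$, take $X$ of size $n$; the existence of morphisms $\alpha_k\colon X^*\to G$ with $|\langle\alpha_k(X)\rangle|$ unbounded in $k$ forces $|F_V(X)|=\infty$ via the universal property of $F_V(X)$. Since $\eta(X)$ generates $F_V(X)$ as a group, the submonoid $\eta(X^*)$ is itself infinite: either some $\eta(x)$ has infinite order, or every $\eta(x)$ is torsion and $\eta(X^*)$ coincides with $\langle\eta(X)\rangle=F_V(X)$. Pick $w_1,w_2,\ldots\in X^*$ with pairwise distinct $\eta$-values; by the observation above no pair $i<j$ satisfies $w_i\preceq_G w_j$, so $(X^*,\preceq_G)$ is not a wqo.

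For (ii)$\Rightarrow$(i) the first task is to prove that $F_V(X)$ is \emph{finite} for each finite $X$. Under (ii), every morphism $\alpha\colon X^*\to G$ has image of size at most $k=k(|X|)$, and up to the unique generator-preserving isomorphism of images there are only finitely many such $\alpha$ (finitely many finite groups of size $\le k$, each with finitely many generating $|X|$-tuples). Choosing representatives $\alpha_1,\ldots,\alpha_M\colon X^*\to G$ of these marked isomorphism classes yields an embedding $F_V(X)\hookrightarrow\prod_{i=1}^M\langle\alpha_i(X)\rangle$, so $|F_V(X)|\le k^M$.

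Now, given an infinite sequence $(w_j)_{j\ge 1}$ in $X^*$, by pigeonhole in the finite group $F_V(X)$ I pass to an infinite subsequence on which $\eta(w_j)=g$ is constant. Set $Y:=X\times F_V(X)$, a finite alphabet, and encode each $w=x^{(1)}\cdots x^{(l)}$ by annotating each letter with the $\eta$-value of its prefix, $\widetilde w:=(x^{(1)},1)(x^{(2)},\eta(x^{(1)}))\cdots(x^{(l)},\eta(x^{(1)}\cdots x^{(l-1)}))\in Y^*$. Classical Higman's Lemma applied to $Y^*$ furnishes $i<j$ with $\widetilde{w_i}\preceq\widetilde{w_j}$; decoding produces a factorization $w_j=v_0u_1v_1\cdots u_mv_m$ with $w_i=u_1\cdots u_m$, in which the prefix annotations at the $u_k$-positions of $\widetilde{w_j}$ match those of $\widetilde{w_i}$. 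A short telescoping computation then forces $\eta(v_0)=\cdots=\eta(v_{m-1})=1$, and the common total value $\eta(w_i)=\eta(w_j)=g$ forces $\eta(v_m)=1$ as well, yielding $w_i\preceq_G w_j$. The main obstacle is the finiteness of $F_V(X)$ — the bookkeeping over marked isomorphism classes of $n$-generated subgroups of $G$ — after which the Higman reduction via prefix annotation is clean, with the pigeonhole step essential for controlling the rightmost gap $v_m$.
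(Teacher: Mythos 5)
Your proof is correct and follows essentially the same route as the paper's: your relatively free group $F_V(X)$ with its canonical map $\eta$ plays exactly the role of the paper's finite group $H^\ell$ and morphism $\beta$ (both realize the quotient of $X^*$ by $\bigcap_{\alpha\colon X^*\to G}\ker\alpha$, and your finiteness argument via marked isomorphism classes mirrors the paper's reduction to $H=H_1\times\cdots\times H_m$), after which the Higman argument on prefix-annotated letters over $X\times F_V(X)$ is identical, including the pigeonhole step and the telescoping that handles the last gap. The only cosmetic difference is in (i)$\Rightarrow$(ii), where you produce an infinite antichain by observing that the submonoid $\eta(X^*)$ is infinite, rather than by the paper's diagonal choice of $w_{i+1}$ against $\alpha_{i+1}$.
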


\begin{proof}
Suppose \ref{st:bounded} does not hold. Then there is a finite alphabet $X$ and a
sequence of morphisms $\alpha_1,\alpha_2,...\colon X^*\to G$ such that
$|\alpha_i(X^*)|\ge i$ for each $i\ge 1$. We inductively define a sequence of
words $w_1,w_2,\ldots\in X^*$. Choose $w_1=\emptyWord$ and suppose
$w_1,\ldots,w_i$ have been defined. Since $|\alpha_{i+1}(X^*)|\ge i+1$, we can
choose $w_{i+1}\in X^*$ to be a word such that $\alpha_{i+1}(w_{i+1})$ is
outside of $\{\alpha_{i+1}(w_1),\ldots,\alpha_{i+1}(w_i)\}$. We claim that the
words $w_1,w_2,\ldots$ are pairwise incomparable with respect to $\preceq_G$.
Observe that $u\preceq_G v$ implies $\alpha(u)=\alpha(v)$ for any morphism
$\alpha\colon X^*\to G$.  Since for any $i,j\in\N$, $i<j$, the construction
guarantees $\alpha_j(w_j)\ne\alpha_j(w_i)$, the words are pairwise
incomparable.

Suppose \ref{st:bounded} does hold and let $X$ be a finite alphabet. First, we claim
that there is a finite group $H$ such that $\preceq_G$ coincides with $\preceq_H$.
By \ref{st:bounded} there are only finitely many non-isomorphic groups
that appear as $\alpha(X^*)$ for morphisms $\alpha\colon X^*\to G$,
say $H_1,\ldots,H_m$, and each of them is finite. For $H=H_1\times\cdots\times H_m$,
we have
\[ \bigcap_{\alpha:X^*\to G} \ker(\alpha) = \bigcap_{\alpha:X^*\to H}\ker(\alpha).\]
Hence, $\preceq_G$ coincides with $\preceq_H$. There are only finitely many
morphisms $\alpha\colon X^*\to H$, say $\alpha_1, \ldots, \alpha_\ell$. If $\beta:
X^*\to H^\ell$ is the morphism with
$\beta(w)=(\alpha_1(w),\ldots,\alpha_\ell(w))$, then
\[ \bigcap_{\alpha:X^*\to H}\ker(\alpha)=\ker(\beta).\]
Thus, $\preceq_H$ coincides with $\preceq_\beta$. Therefore, it suffices to show
that $\preceq_\beta$ is a well-quasi-order.

Let $w_1,w_2,\ldots\in X^*$ be an infinite sequence of words. Since $H^\ell$ is
finite, we can assume that all the $w_i$ have the same image under $\beta$;
otherwise, choose an infinite subsequence on which $\beta$ is constant.  Consider the
alphabet $Y=X\times H^\ell$. For every $w\in X^*$, $w=a_1\cdots a_r$, let
$\bar{w}\in Y^*$ be the word
\begin{equation}\label{eq:embed}\bar{w}=(a_1, \beta(a_1))(a_2, \beta(a_1a_2))\cdots(a_r,\beta(a_1\cdots a_r)). \end{equation}
Applying Higman's Lemma to the sequence $\bar{w}_1,\bar{w}_2,\ldots$ yields
indices $i<j$ such that $\bar{w}_i\preceq \bar{w}_j$.  This means
$\bar{w}_i=u'_1\cdots u'_r$, $\bar{w}_j=v'_0u'_1v'_1\cdots u'_rv'_r$ for some
$u'_1,\ldots, u'_r\in Y$, $v'_0,\ldots,v'_r\in Y^*$. By definition of
$\bar{w}_i$ and $\bar{w}_j$, we have $u'_s=(u_s, h_s)$ for $1\le s\le r$, where
$h_s=\beta(u_1\cdots u_s)$ and $w_i=u_1\cdots u_r$. Let $\pi_1\colon Y^*\to X^*$ be
the morphism extending the projection onto the first component, and let
$v_s=\pi_1(v'_s)$ for $0\le s\le r$. Then clearly $w_j=v_0u_1v_1\cdots u_rv_r$.
We claim that $\beta(v_s)=1$ for $0\le s\le r$, from which $w_i\preceq_\beta w_j$
and hence the lemma follows.  Since $\bar{w}_j$ is also obtained
according to \eqref{eq:embed}, we have
\begin{equation*} \beta(u_1\cdots u_{s+1}) = h_{s+1} = \beta(v_0 u_1v_1 \cdots u_sv_s u_{s+1})\end{equation*}
for $0\le s\le r-1$. By induction on $s$, this allows us to deduce
$\beta(v_s)=1$ for $0\le s\le r-1$.  Finally, $\beta(w_i)=\beta(w_j)$ entails
\[\beta(u_1\cdots u_r)=\beta(w_i)=\beta(w_j)=\beta(v_0u_1v_1\cdots u_rv_r)=\beta(u_1\cdots u_r v_r),\]
implying $\beta(v_r)=1$.
\qed
\end{proof}

\subsection{Loops}
Let $G=H\wr F_2$ and fix free generators $a,b\in F_2$.  Recall that elements of $G$ are pairs $(k, f)$, where $k\in
K=\bigoplus_{g\in F_2} H$ and $f\in F_2$.  In the following, we simply
write $kf$ for the pair $(k,f)$.
Fix an automaton $A=(Q,G,E,q_0,Q_F)$  with labels from $G$ for the rest of Section~\ref{sec-dec}. 
We want to check whether $1 \in L(A)$. Since $G$ is generated as a monoid by $H\cup
\{a,a^{-1},b,b^{-1}\}$, we can assume that  $E\subseteq Q\times (H\cup
\{a,a^{-1},b,b^{-1}\})\times Q$.

A \emph{configuration} is an element of $Q\times G$. For
configurations $(p,g_1)$, $(q,g_2)$, we write $(p,g_1)\step{A}(q,g_2)$ if there
is a $(p,g,q)\in E$ such that $g_2=g_1g$. For elements $f,g\in F_2$, we write
$f\le g$ ($f<g$) if the reduced word representing $f$ is a (proper) prefix of
the reduced word representing $g$.
We say that an element $f\in F_2\setminus\{1\}$ is \emph{of type} $x \in \{a,a^{-1},b,b^{-1}\}$
if the reduced word representing $f$ ends with $x$.
Furthermore, $1\in F_2$ is of type $1$. Hence, the set of \emph{types} is
$T=\{1,a,a^{-1},b,b^{-1}\}$. When regarding the Cayley graph of $F_2$ as a
tree with root $1$, the children of a node of type $t$ are of the types
$C(t)=\{a,a^{-1},b,b^{-1}\}\setminus \{t^{-1}\}$. Clearly, two nodes have the same
type if and only if their induced subtrees of the Cayley graph are isomorphic.
The elements of $D=\{a,a^{-1},b,b^{-1}\}$ will also be called
\emph{directions}.

Let $p,q\in Q$ and $t\in T$. A sequence of configurations
\begin{equation} (q_1, k_1f_1) \step{A} (q_2, k_2f_2) \step{A} \cdots \step{A} (q_n, k_nf_n)\label{comp}\end{equation}
is called a \emph{well-nested $(p,q)$-computation in $t$}  if
\begin{enumerate}[label=(\roman*)]
\item $q_1=p$ and $q_n=q$,
\item $f_1=f_n$ is of type $t$, and
\item\label{cond:subtree} $f_i \ge f_1$ for $1< i < n$.
\end{enumerate}
We define the \emph{effect} of the computation to be $f_1^{-1}k_1^{-1}k_nf_1\in
K$.  Hence, the effect describes the change imposed by applying the
corresponding sequence of transitions, independently of the configuration in
which it starts. For $f\in F_2$, let $|f|$ be the length of the reduced word
representing $f$.  The \emph{depth} of the computation \eqref{comp} is the
maximum value of $|f_1^{-1}f_i|$ for $1\le i\le n$.  Of course, if $f_1=f_n=1$,
condition \ref{cond:subtree} is satisfied automatically. Hence, we have $1\in L(A)$
if and only if for some $q\in Q_F$, there is a well-nested
$(q_0,q)$-computation in $1$ with effect $1$.

For $d \in C(t)$,
a well-nested $(p,q)$-computation \eqref{comp} in $t$ is called a
\emph{$(p,d,q)$-loop in $t$} if in addition $f_1d \leq f_i$ for $1<i<n$.  
Note that  there is a $(p,d,q)$-loop in $t$ that starts in
$(p,kf)$  (where $f$ has type $t$)
with effect $e$ and depth $m$ if and only if there exists a
$(p,d,q)$-loop in $t$ with effect $e$ and depth $m$ that starts in $(p, t)$.

Given $p,q\in Q$, $t\in T$, $d\in C(t)$, it is decidable whether there is a
$(p,d,q)$-loop in $t$: This amounts to checking whether a given automaton with
input alphabet $\{a,a^{-1},b,b^{-1}\}$ accepts a word representing the identity
of $F_2$ such that no proper prefix represents the identity of $F_2$. Since this can be
accomplished using pushdown automata, we can compute the set
\[ X_t = \{(p,d,q)\in Q\times C(t)\times Q \mid \text{there is a
$(p,d,q)$-loop in $t$}\}. \]

\subsection{Loop patterns}
Given a word $w=(p_1,d_1,q_1)\cdots (p_n,d_n,q_n)\in X_t^*$, a \emph{loop
assignment for $w$} is a choice of a $(p_i,d_i,q_i)$-loop in $t$ for each position $i$,
$1\le i\le n$. The \emph{effect} of a loop assignment is $e_1\cdots e_n\in K$, where
$e_i\in K$ is the effect of the loop assigned to position $i$.  The
\emph{depth} of a loop assignment is the maximum depth of an appearing loop.  A
\emph{loop pattern for $t$} is a word $w\in X_t^*$ that has a loop assignment
with effect $1$.   The depth of the loop pattern is the minimum depth of a loop
assignment with effect $1$.  Note that applying the loops for the symbols in a
loop pattern $(p_1,d_1,q_1)\cdots (p_n,d_n,q_n)$ does not have to be a computation: We do not require
$q_i=p_{i+1}$.
Instead, the loop patterns describe the possible ways in which
a well-nested computation can enter (and leave) subtrees of the Cayley graph of
$F_2$ in order to have effect $1$.  The sets
\[ P_t = \{w\in X_t^* \mid \text{$w$ is a loop pattern for $t$} \} \]
for $t\in T$ will therefore play a central role in the decision procedure.

Recall the definition of the well-quasi-order $\preceq_H$ from Section~\ref{sec-wqo}.

\begin{lemma}\label{lemma:upwardclosed}
For each $t\in T$, the set $P_t$ is an upward closed subset of $X_t^*$ with respect to $\preceq_H$.
\end{lemma}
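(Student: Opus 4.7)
The plan is, given $u\in P_t$ with $u\preceq_H v$, to exhibit a loop assignment for $v$ whose effect is $1_K$. Fix a loop assignment $\sigma$ for $u=u_1\cdots u_n$ with effect $1_K$ (witnessing $u\in P_t$) and a factorization $v=v_0u_1v_1\cdots u_nv_n$ such that $\alpha(v_i)=1_H$ for every monoid morphism $\alpha\colon X_t^*\to H$ and every $i$ (witnessing $u\preceq_H v$). For each $y\in X_t$ fix once and for all a canonical $y$-loop $\ell_y$ in $t$---one exists by the very definition of $X_t$---and let $e_y\in K$ be its effect. Extend $\sigma$ to a loop assignment $\tau$ for $v$ by assigning $\ell_y$ to every occurrence of $y$ lying inside some block $v_i$.

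Let $\beta\colon X_t^*\to K$ be the monoid morphism determined by $\beta(y)=e_y$. The effect of $\tau$ equals
\[ \beta(v_0)\cdot\mathrm{eff}(\sigma(u_1))\cdot\beta(v_1)\cdots\mathrm{eff}(\sigma(u_n))\cdot\beta(v_n)\in K, \]
so the argument reduces to proving $\beta(v_i)=1_K$ for every $i$. This is the main step, and it invokes the direct-sum decomposition $K=\bigoplus_{g\in F_2}H$: for each $f\in F_2$, evaluation $\mathrm{ev}_f\colon K\to H$, $k\mapsto k(f)$, is a group morphism, so $\gamma_f:=\mathrm{ev}_f\circ\beta\colon X_t^*\to H$ is a monoid morphism with $\gamma_f(y)=e_y(f)$. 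The hypothesis on the factorization applies to $\gamma_f$ and yields $\gamma_f(v_i)=1_H$ for every $f$ and $i$; equivalently, $\beta(v_i)$ is trivial at every coordinate, so $\beta(v_i)=1_K$.

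Plugging back in, the displayed product collapses to $\mathrm{eff}(\sigma(u_1))\cdots\mathrm{eff}(\sigma(u_n))=1_K$, which is the effect of $\sigma$ on $u$; hence $v\in P_t$. The only real conceptual obstacle is that individual loop effects naturally live in the much larger group $K$ rather than in $H$, so the morphisms testifying $u\preceq_H v$ do not act on them directly; the resolution is exactly the evaluation trick above, which exploits the fact that $K$ is a direct sum of copies of $H$, so triviality of an element of $K$ is detected by its collection of ``$H$-shadows'', and these shadows are precisely the morphisms into $H$ that $\preceq_H$ is designed to trivialize.
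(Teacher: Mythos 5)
Your proof is correct and follows essentially the same route as the paper: extend the given loop assignment by fixed canonical loops on the inserted blocks, let $\beta$ be the morphism into $K$ recording their effects, and conclude $\beta(v_i)=1$ from the defining property of the order. The paper packages your ``evaluation trick'' as the preliminary observation that $\preceq_H$ and $\preceq_K$ coincide (since $K$ is a direct sum of copies of $H$), but the underlying argument is identical.
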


\begin{proof}
Since $K$ is a direct sum of copies of $H$, the orders $\preceq_H$ and $\preceq_K$
coincide.  It therefore suffices to show that $P_t$ is upward closed with
respect to $\preceq_K$.  Let $u\in P_t$ and $u \preceq_K v$, $v\in X_t^*$, meaning
$v=v_0u_1v_1\cdots u_nv_n$ with $u=u_1\cdots u_n$ and $\alpha(v_i)=1$, $0\le
i\le n$, for every morphism $\alpha\colon X_t^*\to K$.  Since $u\in P_t$, there is a
loop assignment for each $u_i$, $1\le i\le n$, with effect $e_i$ such that
$e_1\cdots e_n=1$. By construction of $X_t$, for each $(p,d,q)\in X_t$, there
is a $(p,d,q)$-loop, say $\ell_{p,d,q}$, in $t$. Let $\varphi\colon X_t^*\to K$ be the morphism such that
for each $(p,d,q)\in X_t$, $\varphi((p,d,q))$ is the effect of $\ell_{p,d,q}$.
Choosing $\ell_{p,d,q}$ for each occurrence of $(p,d,q)$ in a subword $v_i$
and reusing the loop assignments for the $u_i$ defines a loop assignment for $v$.
Since $\varphi(v_i)=1$ for $0\le i\le n$, the effect of this loop assignment is
$\varphi(v_0)e_1\varphi(v_1)\cdots e_n\varphi(v_n)=e_1\cdots e_n=1$. Hence, $v\in P_t$.
\qed
\end{proof}

Since $\preceq_H$ is a well-quasi-order, the previous lemma already implies
that each $P_t$ is a regular language.  On the one hand, this follows from the
fact that the upward closure of each singleton is regular.  On the other hand,
this can be deduced by observing that $\preceq_H$ is a monotone order in the
sense of \cite{EhHaRo83}.  Therein, Ehrenfeucht, Haussler, and Rozenberg show
that languages that are upward closed with respect to monotone
well-quasi-orders are regular.  Our next step is a characterization of the sets
$P_t$ that allows us to compute finite automata for them. In order to state
this characterization, we need the following definitions.

Let $X,Y$ be alphabets. A \emph{regular substitution} is a map $\sigma\colon X\to
\Powerset{Y^*}$ such that $\sigma(x)$ is a regular language for every $x\in X$.
For $w\in X^*$, $w=w_1\cdots w_n$, $w_i\in X$, let $\sigma(w)=R_1\cdots R_n$,
where $\sigma(w_i)=R_i$ for $1\le i\le n$.  Given a set $R\subseteq Y^*$ and a
regular substitution $\sigma\colon X\to \Powerset{Y^*}$, let $\sigma^{-1}(R)=\{w\in
X^*\mid \sigma(w)\cap R\ne\emptyset\}$. Note that if $R$ is regular, then
$\sigma^{-1}(R)$ is regular as well \cite[Proposition 2.16]{Sak09}, and an automaton for 
$\sigma^{-1}(R)$ can be constructed effectively from an automaton for
$R$ and automata for the $\sigma(x)$.\footnote{In
  \cite{Sak09}, it is shown that the class of regular languages is
  closed under arbitrary inverse substituitions. Moreover, the
  construction is effective if $\sigma$ is a regular substitution.}

The alphabet $Y_t$ is given by
\[Y_t = X_t \cup ( (Q\times H\times Q) \cap E).\]
The morphism $\pi_t\colon Y_t^*\to X_t^*$ is the projection onto $X_t^*$, meaning
\[\pi_t(y)= \begin{cases}
y & \text{ for } y\in X_t \\
\emptyWord & \text{ for } y\in Y_t\setminus X_t .
\end{cases}\]
The morphism $\nu_t\colon Y_t^*\to H$ is defined by
\begin{eqnarray*}
\nu_t((p,d,q)) & = & 1 \text{ for } (p,d,q)\in X_t \\
\nu_t((p,h,q)) & = & h \text{ for } (p,h,q)\in Y_t\setminus X_t.
\end{eqnarray*}
For $p,q\in Q$ and $t\in T$, define the regular set
\[R_{p,q}^t = \{ (p_0,g_1,p_1) ( p_1, g_2, p_2) \cdots (p_{n-1}, g_n,p_n) \in Y_t^* \mid p_0 = p, p_n = q \}.\]
Given $t\in T$
and $d\in C(t)$, the regular
substitution $\sigma_{t,d}\colon X_t\to\Powerset{Y_d^*}$ is defined by
\begin{eqnarray*}
\sigma_{t,d}((p,d,q)) &=& \bigcup \{ R^d_{p',q'} \mid (p,d,p'),  (q',d^{-1},q)\in E\} \\
\sigma_{t,d}((p,u,q)) &=& \{\varepsilon\} \ \text{ for } u\in C(t) \setminus\{d\}.
\end{eqnarray*}
Given two tuples, $(U_t)_{t\in T}$ and $(V_t)_{t\in T}$ with $U_t,V_t\subseteq
X_t^*$, we write $(U_t)_{t\in T}\le(V_t)_{t\in T}$ if $U_t\subseteq V_t$ for
each $t\in T$.

\begin{lemma}\label{lemma:smallest}
$(P_t)_{t\in T}$ is the smallest tuple such that for every $t\in T$ we have
$\emptyWord\in P_t$ and
\begin{equation} \bigcap_{d\in C(t)} \sigma_{t,d}^{-1}\left(\pi_d^{-1}(P_d) \cap \nu_d^{-1}(1)\right) \subseteq P_t. \label{eq:substclosure}\end{equation}
\end{lemma}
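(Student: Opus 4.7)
The goal is two-fold: first, verify that $(P_t)_{t\in T}$ satisfies both $\varepsilon\in P_t$ and the substitution closure in~\eqref{eq:substclosure}; second, show that any other tuple $(Q_t)_{t\in T}$ satisfying the same conditions has $P_t\subseteq Q_t$ for every $t$. Both steps rely on a single algebraic fact: the effect in $K$ of a $(p,d,q)$-loop in $t$ is supported at positions of $F_2$ lying in the $d$-subtree (after the standard shift that brings the starting cursor to the root~$1$), so effects of loops in distinct directions $d\in C(t)$ have pairwise disjoint support and commute. Thus for $w=(p_1,d_1,q_1)\cdots(p_n,d_n,q_n)\in X_t^*$ equipped with a loop assignment whose $i$-th inner well-nested computation has effect $\tilde e_i$, the combined effect $\prod_i d_i\tilde e_i d_i^{-1}$ equals $1$ in $K$ if and only if $\prod_{i\,:\,d_i=d}\tilde e_i=1$ for every $d\in C(t)$.

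For the first step, $\varepsilon\in P_t$ is witnessed by the empty loop assignment. Given $w$ in the intersection of \eqref{eq:substclosure} with witnesses $w'_d\in\sigma_{t,d}(w)$ satisfying $\pi_d(w'_d)\in P_d$ and $\nu_d(w'_d)=1$, I would unpack $\sigma_{t,d}$: for every position $i$ with $d_i=d$, $w'_d$ selects a factor $r_i\in R^d_{p'_i,q'_i}$ with $(p_i,d,p'_i),(q'_i,d^{-1},q_i)\in E$. The $X_d$-letters of $r_i$ are placeholders for sub-loops at the child node, while its $H$-letters are concrete $E$-edges at the root. A loop assignment for $\pi_d(w'_d)\in P_d$ fills in the placeholders and turns $r_i$ into an actual inner well-nested computation $\tilde C_i$ in $d$; bracketing $\tilde C_i$ with $(p_i,d,p'_i)$ and $(q'_i,d^{-1},q_i)$ produces a $(p_i,d_i,q_i)$-loop in $t$. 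By the decomposition above, the total effect of the induced assignment on $w$ collapses direction by direction, and within direction $d$ it agrees with the effect of $w'_d$, which splits further into the root $H$-contribution $\nu_d(w'_d)=1$ and the contribution of the $\pi_d(w'_d)$-assignment, equal to $1$ by hypothesis. Hence $w\in P_t$.

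For the second step, I would induct on the depth $m$ of a witness loop assignment for $w\in P_t$. The base case $m=0$ forces $w=\varepsilon\in Q_t$. For the step, run the previous construction in reverse: split each loop into preamble, inner well-nested computation $\tilde C_i$ of depth at most $m-1$, and epilogue; encode $\tilde C_i$ by listing its root-level moves as $r_i\in R^{d_i}_{p'_i,q'_i}$; and, for each $d\in C(t)$, concatenate the $r_i$ with $d_i=d$ in order to form $w'_d\in\sigma_{t,d}(w)$. The disjoint-support argument forces $\nu_d(w'_d)=1$ and extracts from the $\tilde C_i$ a loop assignment for $\pi_d(w'_d)$ of depth at most $m-1$ and effect $1$, so $\pi_d(w'_d)\in P_d$. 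The inductive hypothesis gives $\pi_d(w'_d)\in Q_d$ for every $d\in C(t)$, and the closure property of $(Q_t)_{t\in T}$ finally yields $w\in Q_t$.

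The main obstacle is careful bookkeeping around the fact that a loop pattern is not itself a computation: the state sequence $q_i,p_{i+1}$ need not be composable, so the loops with direction $d$ do not concatenate into a single well-nested computation in $d$. Their \emph{effects} in $K$ nevertheless multiply like those of a single computation, because individual contributions live in distinct $H$-factors indexed by the $d$-subtree; matching this clean algebraic separation to the depth decrement that drives the induction is the delicate part.
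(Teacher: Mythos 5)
Your proposal is correct and takes essentially the same route as the paper: the paper's proof merely stratifies $P_t$ by depth and asserts the recurrence $P^{(i+1)}_t = P^{(i)}_t \cup \bigcap_{d\in C(t)}\sigma_{t,d}^{-1}\bigl(\pi_d^{-1}(P^{(i)}_d)\cap\nu_d^{-1}(1)\bigr)$ without further justification, and your two steps (closure of $(P_t)_{t\in T}$ and minimality by induction on depth) are exactly the two inclusions of that recurrence, together with the disjoint-support argument that the paper leaves implicit.
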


\begin{proof}
For each $i\in\N$, let $P^{(i)}_t\subseteq X_t^*$ be the set of loop patterns for $t$
whose depth is at most $i$.  Then $P^{(0)}_t=\{\emptyWord\}$ and
\[ P^{(i+1)}_t = P^{(i)}_t \cup \bigcap_{d\in C(t)} \sigma_{t,d}^{-1}\left(\pi_d^{-1}(P^{(i)}_d) \cap \nu_d^{-1}(1)\right).\]
The lemma follows since $P_t=\bigcup_{i\ge 0} P_t^{(i)}$.
\qed
\end{proof}

Given a language $L\subseteq X_t^*$, let
$\upclosure{t}{L}=\{v\in X_t^* \mid \text{$u\preceq_H v$ for some $u\in L$}\}$.

\begin{theorem}\label{maindecidablethm}
The rational subset membership problem
 is decidable for every group $G=H\wr F$,
where $H$ is finite and $F$ is a finitely generated free group.
\end{theorem}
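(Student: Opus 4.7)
The plan is, for each type $t \in T$, to effectively compute a finite automaton for the loop-pattern language $P_t \subseteq X_t^*$, and then to reduce ``$1 \in L(A)$?'' to emptiness of a regular language built from these automata. The first step is to turn Lemma~\ref{lemma:smallest} into a constructive fixed-point iteration: set $U_t^{(0)} = \{\emptyWord\}$ and
\[ U_t^{(i+1)} \;=\; U_t^{(i)} \;\cup\; \bigcap_{d\in C(t)} \sigma_{t,d}^{-1}\!\left(\pi_d^{-1}(U_d^{(i)})\cap\nu_d^{-1}(1)\right). \]
The sets $X_t$ are computable, as observed after their definition, and inverse regular substitutions \cite[Proposition~2.16]{Sak09}, morphic preimages, intersections, and unions all preserve regularity effectively, so each $U_t^{(i)}$ is a regular language given by an explicit automaton; by Lemma~\ref{lemma:smallest}, $P_t = \bigcup_i U_t^{(i)}$.

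Ensuring that this iteration actually halts is the main obstacle, and here the well-quasi-order $\preceq_H$ enters the picture. At every step I would replace $U_t^{(i)}$ by its upward closure $\upclosure{t}{U_t^{(i)}}$. By Lemma~\ref{lemma:upwardclosed} the result still lies inside $P_t$, and it remains effectively regular: the upward closure $I$ of $\{\emptyWord\}$ under $\preceq_H$ is regular (Section~\ref{sec-wqo}), and for an arbitrary regular $L$ one has $\upclosure{t}{L} = I \cdot s(L)$, where $s$ is the regular substitution $x \mapsto xI$, which preserves regularity. Any strictly increasing chain of upward closed subsets of $(X_t^*, \preceq_H)$ is necessarily finite: otherwise one could pick a newly minimal element at each step and obtain an infinite $\preceq_H$-antichain, contradicting the wqo property proved in Section~\ref{sec-wqo}. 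After finitely many rounds, $U_t^{(i+1)} = U_t^{(i)}$ holds for every $t$ simultaneously---a condition checkable by equality tests on regular languages---and at that point $U_t^{(i)} = P_t$.

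With automata for the $P_t$ in hand, I would finish by expressing acceptance as a loop-pattern condition at the root. A well-nested $(q_0,q)$-computation in type $1$ with effect $1$ decomposes as a sequence of root-level $H$-transitions interleaved with loops in $1$ in various directions, recorded as a word $w \in R^1_{q_0,q}$ satisfying $\nu_1(w) = 1$ (the $H$-transitions at the root cancel) and $\pi_1(w) \in P_1$ (the loops admit compatible interior realizations whose total subtree effect is $1$). Hence $1 \in L(A)$ iff $R^1_{q_0,q}\cap\nu_1^{-1}(1)\cap\pi_1^{-1}(P_1) \ne \emptyset$ for some $q \in Q_F$, an emptiness problem for a regular language that is effectively decidable. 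The extension to an arbitrary finitely generated free group $F = F_k$ needs no new ideas: one redefines the sets of directions $D$ and types $T$ over the chosen basis of $F_k$, and the entire loop/loop-pattern apparatus, including the fixed-point computation, goes through verbatim.
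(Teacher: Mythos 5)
Your proposal is correct and follows essentially the same route as the paper: a saturation computation of the upward-closed regular languages $P_t$ via Lemmas~\ref{lemma:upwardclosed} and~\ref{lemma:smallest}, termination by the absence of infinite ascending chains of upward-closed sets in the wqo $\preceq_H$, and the final reduction of $1\in L(A)$ to emptiness of $R^1_{q_0,q}\cap\nu_1^{-1}(1)\cap\pi_1^{-1}(P_1)$. The only (harmless) deviations are that you add the entire one-step image and its upward closure per round, where the paper adds one witness word at a time, and that you rerun the construction over a basis of $F_k$ instead of embedding $H\wr F$ into $H\wr F_2$; both variants are effective and terminate by the same argument.
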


\begin{proof}
Since $H\wr F$ is a subgroup of $H\wr F_2$ (since $F$ is a subgroup of $F_2$), it suffices to show decidability
for $G=H\wr F_2$.
First, we compute finite automata for the languages $P_t$.  We do this by
initializing $U_t^{(0)}:=\upclosure{t}{\{\emptyWord\}}$ for each $t\in T$ and then
successively extending the sets $U^{(i)}_t$, which are represented by finite
automata, until they equal $P_t$: If there is a $t\in T$ and a
word
\[ w\in \bigcap_{d\in C(t)} \sigma_{t,d}^{-1} \left(\pi_d^{-1}(U_d^{(i)})\cap\nu_d^{-1}(1)\right) \setminus U^{(i)}_t, \]
we set $U_t^{(i+1)}:=U^{(i)}_t \cup \upclosure{t}{\{w\}}$ and
$U^{(i+1)}_{u}:=U^{(i)}_{u}$ for $u\in T\setminus \{t\}$. Otherwise we stop.
By induction on $i$, it follows from Lemma \ref{lemma:upwardclosed} and Lemma
\ref{lemma:smallest} that $U^{(i)}_t\subseteq P_t$.

In each step, we obtain $U^{(i+1)}_t$ by adding new words to $U^{(i)}_t$.
Since the sets $U^{(i)}_t$ are upward closed by construction and there is no
infinite (strictly) ascending chain of upward closed sets in a wqo, the algorithm above
has to terminate with some tuple $(U^{(k)}_t)_{t\in T}$. This, however, means
that for every $t\in T$
\[ \bigcap_{d\in C(t)} \sigma_{t,d}^{-1}\left(\pi_d^{-1}(U^{(k)}_d)\cap \nu_d^{-1}(1)\right)\subseteq U^{(k)}_t. \]
Since on the other hand $\emptyWord\in U^{(k)}_t$ and $U^{(k)}_t\subseteq P_t$,
Lemma \ref{lemma:smallest} yields $U^{(k)}_t=P_t$.

Now we have $1\in L(A)$ if and only if $\pi_1^{-1}(P_1)\cap\nu_1^{-1}(1)\cap
R^1_{q_0,q}\ne\emptyset$ for some $q\in Q_F$, which can again be checked by
constructing and analyzing finite automata.
\qed
\end{proof}

\begin{theorem}
The rational subset membership problem
 is decidable for every group $H\wr V$ with $H$ finite and $V$ virtually free.
\end{theorem}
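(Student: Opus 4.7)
The plan is to combine Theorem \ref{maindecidablethm} with Proposition \ref{finiteindex} and a standard finite-index transfer argument, reducing the virtually free base $V$ to a free subgroup at the cost of enlarging the finite fiber $H$ to $H^m$.

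First, since $V$ is finitely generated and virtually free, it contains a finitely generated free subgroup $F$ of some finite index $m = [V:F]$. By Proposition \ref{finiteindex}, $H^m \wr F$ is isomorphic to a subgroup of index $m$ in $H \wr V$. Because $H$ is finite, so is $H^m$, and since $F$ is finitely generated and free, Theorem \ref{maindecidablethm} gives a decision procedure for rational subset membership in the subgroup $U := H^m \wr F$ of $G := H \wr V$.

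The remaining step is a routine finite-index lifting: decidability of rational subset membership is inherited upward from a finite-index subgroup to the ambient finitely generated group. Let $T$ be a set of right coset representatives for $U$ in $G$ (for instance, the $m$ representatives of the form $(e,t)$ produced in the proof of Proposition \ref{finiteindex}). Given a finite automaton $A$ over a symmetric generating set of $G$ accepting a rational subset $R \subseteq G$, one forms the product of $A$ with the finite Schreier coset graph of $G/U$. Its states are pairs $(q, Ut)$, and by relabeling each transition with the Schreier generator of $U$ corresponding to the traversed coset edge, one obtains, for each $t \in T$, a finite automaton over a symmetric generating set of $U$ accepting a rational subset $R_t \subseteq U$. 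These automata satisfy $R = \bigcup_{t \in T} R_t \cdot t$. To test whether a query element $g \in G$ lies in $R$, we determine the coset of $g$, write $g = u \cdot t$ with $t \in T$ and $u \in U$, and apply the decision procedure for $U$ from Theorem \ref{maindecidablethm} to each of the finitely many candidates $u \in R_t$.

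The only nontrivial point is the effectivity of the Schreier construction, i.e., that a coset transversal $T$, the induced action of $G$ on $G/U$, and Schreier generators for $U$ can be computed from the input data. This is unproblematic: both $H$ and $V$ have decidable word problem, a free subgroup $F$ of finite index in $V$ together with a transversal for $F$ in $V$ can be computed from any finite presentation of $V$ as a virtually free group, and a transversal for $H^m \wr F$ in $H \wr V$ is then immediate from Proposition \ref{finiteindex}. Hence the whole construction is effective, and rational subset membership in $H \wr V$ is decidable.
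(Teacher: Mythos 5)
Your proposal is correct and follows essentially the same route as the paper: pass to the finite-index subgroup $H^m\wr F$ via Proposition~\ref{finiteindex}, apply Theorem~\ref{maindecidablethm} to it, and transfer decidability up the finite extension. The only difference is that you spell out the Schreier-coset-graph argument for the finite-index transfer, whereas the paper simply cites the known fact that decidability of rational subset membership is preserved by finite extensions.
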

\begin{proof}
This is immediate from Theorem~\ref{maindecidablethm} and Proposition~\ref{finiteindex},
because if $F$ is a free subgroup of index $m$ in $V$, then $H^m\wr F$ is isomorphic to a subgroup of index $m$ in $H\wr V$
and decidability of rational subset membership is preserved by finite extensions~\cite{Gru99,KaSiSt06}.
\qed
\end{proof}

\section{Undecidability} \label{sec-undec}

In this section, we will prove the second main result of this paper:
The wreath product $\mathbb{Z}\wr\mathbb{Z}$ contains a fixed
submonoid with an undecidable membership problem. Our proof
is based on the halting problem for 2-counter machines (also known
as Minsky machines), which is a classical undecidable problem.

\subsection{2-counter machines}

A  2-counter machine (also known as
Minsky machine) is a tuple $C = (Q, q_0, q_f, \delta)$, where
\begin{itemize}
\item $Q$ is a finite set of states,
\item $q_0 \in Q$ is the initial state,
\item $q_f \in Q$ is the final state, and
\item $\delta \subseteq (Q \setminus \{q_f\}) \times \{c_0,c_1\} \times \{+1,-1,=0\} \times
  Q$ is the set of transitions.
\end{itemize}
The set of configurations is $Q \times \mathbb{N} \times \mathbb{N}$.
On this set we define a binary relation $\to_C$ as follows:
$(p, m_0, m_1) \to_C (q, n_0, n_1)$ if and only if one of the
following three cases holds:
 \begin{itemize}
\item There exist $i \in \{0,1\}$ and a
transition $(p, c_i, +1, q) \in \delta$ such that $n_i = m_i+1$ and
$n_{1-i}=m_{1-i}$.
\item There exist $i \in \{0,1\}$ and a
transition $(p, c_i, -1, q) \in \delta$ such that $n_i = m_i-1$ (in particular,
we must have $m_i > 0$) and
$n_{1-i}=m_{1-i}$.
\item There exist $i \in \{0,1\}$ and a
transition $(p, c_i, =0, q) \in \delta$ such that $n_i=m_i=0$ and
$n_{1-i}=m_{1-i}$.
\end{itemize}
It is well known that every Turing-machine can be simulated by a 2-counter machine,
see e.g.~\cite{Min67}. In particular, we have:

\begin{theorem} \label{thm-2-counter}
There exists a fixed  2-counter machine $C= (Q, q_0, q_f, \delta)$ such that the following
problem is undecidable:

\smallskip
\noindent
INPUT: Numbers $m,n \in \mathbb{N}$.

\noindent
QUESTION: Does $(q_0,m,n)  \to^*_C (q_f,0,0)$ hold?
\end{theorem}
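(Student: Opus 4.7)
The plan is to reduce the halting problem of Turing machines (for a fixed universal TM) to the reachability problem $(q_0,m,n)\to_C^*(q_f,0,0)$ for a single fixed 2-counter machine $C$, following Minsky's classical construction.

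First, I would fix a universal Turing machine $U$ so that the set of pairs $\langle M,w\rangle$ for which $U$ halts on input $\langle M,w\rangle$ is undecidable; by standard encoding one can replace the pair $(M,w)$ by a single natural number, or by a pair of natural numbers, via a computable bijection. Next, I would recall Minsky's simulation: any Turing machine $U$ can be simulated by a multi-counter machine (using counters to store the binary contents of the tape to the left and to the right of the head, together with the current symbol encoded in the finite control), and then a $k$-counter machine with $k\ge 2$ can be simulated by a 2-counter machine by encoding the tuple of counter values $(c_1,\dots,c_k)$ as the single integer $p_1^{c_1}p_2^{c_2}\cdots p_k^{c_k}$ stored in one counter, using the second counter as scratch space for multiplication, division, and zero tests on the individual primes. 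Applying this to $U$ yields a fixed 2-counter machine $C'$ with a designated state $q'_f$ such that $U$ halts on $\langle M,w\rangle$ if and only if $C'$, started from a configuration whose counter values are a computable function of $\langle M,w\rangle$, eventually reaches $q'_f$.

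To match the precise shape of the statement, two small adjustments remain. First, the reduction must start from counter values $(m,n)$ rather than from the encoded single number expected by $C'$: I would prepend to $C'$ a small input gadget that, starting from $(q_0,m,n)$, performs the arithmetic needed to turn $(m,n)$ into the prime-power encoding of $\langle M,w\rangle$ expected by $C'$ (using only increments, decrements and zero-tests on the two counters). Second, to guarantee termination in $(q_f,0,0)$ rather than just in some accepting state, I would append after $q'_f$ a cleanup gadget consisting of two loops that repeatedly decrement each counter until a zero-test succeeds, then transition to the single final state $q_f$. Composing these three pieces gives the desired fixed machine $C=(Q,q_0,q_f,\delta)$.

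The only nontrivial step is Minsky's $k$-counter-to-$2$-counter simulation, which is the heart of the argument and is entirely standard (see \cite{Min67}); the input-encoding and cleanup gadgets are routine programming with two counters. The main obstacle is therefore just a careful, but well-known, implementation of prime-power encoding; no new ideas are needed beyond Minsky's construction, which is why I would be content to give the result by reference to \cite{Min67} and a brief indication of the two bookkeeping gadgets above.
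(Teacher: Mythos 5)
The paper offers no proof of this theorem at all --- it is quoted as a well-known consequence of Minsky's simulation of Turing machines by 2-counter machines, with a bare citation to \cite{Min67} --- so your proposal is essentially a reconstruction of that standard argument, and in outline it is the right one: universal TM $\to$ multi-counter machine $\to$ 2-counter machine via prime-power encoding, plus a cleanup phase to land in $(q_f,0,0)$.

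There is, however, one step that would fail as you describe it: the ``input gadget'' that, inside the 2-counter machine, converts the pair $(m,n)$ into the prime-power encoding $p_1^{c_1}\cdots p_k^{c_k}$ expected by the simulator $C'$. That conversion requires computing an exponential function of the input, and it is a classical fact (due to Schroeppel, and the standard caveat attached to Minsky's construction) that a 2-counter machine cannot compute $n\mapsto 2^n$ starting from $n$ in a counter; so this gadget cannot exist in general. The fix is that the gadget is simply unnecessary: the theorem places no constraint on how the input pair $(m,n)$ is related to the original TM instance, so the many-one reduction itself (an unrestricted algorithm) should compute the prime-power code $N$ of $\langle M,w\rangle$ and output the instance $(N,0)$ directly; undecidability of the problem restricted to inputs of this special form already gives undecidability of the problem as stated. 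With that correction --- keep the cleanup gadget, drop the input gadget, and let the reduction do the encoding --- your argument is the standard one and is sound. (Note also that in the paper the theorem is subsequently applied with the further normalization that $C$ alternates between its two counters; that is a harmless modification of $\delta$ and does not affect your proof.)
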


\subsection{Submonoids of $\mathbb{Z} \wr \mathbb{Z}$}

 In this section, we will only consider wreath products of the form
 $H\wr\mathbb{Z}$.
An element $(f,m) \in H\wr\mathbb{Z}$ such that the support of $f$ is contained
in the interval $[a,b]$ (with $a,b \in \mathbb{Z}$)
and $0,m \in [a,b]$
will also be written as a list
$[f(a),\ldots,  f(b)]$,
where in addition the element $f(0)$ is labeled by an incoming (downward) arrow and the element $f(m)$ is labeled
by an outgoing (upward) arrow.

In this section,
we will construct a fixed finitely generated submonoid of the wreath product
$\mathbb{Z} \wr \mathbb{Z}$ with an undecidable membership problem.

Let $C= (Q, q_0, q_f, \delta)$ be the  2-counter machine from
Theorem~\ref{thm-2-counter}. Without loss of generality we can assume that
there exists a partition $Q = Q_0 \cup Q_1$ such that $q_0 \in Q_0$ and
\[\delta \subseteq (Q_0 \times \{c_0\} \times \{+1,-1,=0\} \times Q_1) \cup
(Q_1 \times \{c_1\} \times \{+1,-1,=0\} \times Q_0).\]
In other words, $C$ alternates between the two
counters.  Hence, a transition $(q,c_i, x, p)$ can be just written as $(q,x,p)$.
Let
\[\Sigma = Q \cup\{c,\#\}.\]
Let $\mathbb{Z}^{\Sigma}$ be
the free abelian group generated by $\Sigma$.
First, we will prove that there is a fixed finitely generated submonoid $M$ of the wreath product
$\mathbb{Z}^{\Sigma} \wr \mathbb{Z}$  with an undecidable
membership problem. Let
\[K = \bigoplus_{m \in \mathbb{Z}} \mathbb{Z}^{\Sigma}.\]
An infinite presentation for $\mathbb{Z}^{\Sigma} \wr \mathbb{Z}$ is
\[\mathbb{Z}^{\Sigma} \wr \mathbb{Z} = \langle \Sigma,a \mid [a^n s
a^{-n}, a^m t a^{-m}] = 1\; (n,m \in \mathbb{Z}, s,t \in \Sigma)\rangle.\]
Here, $a$ generates the right $\mathbb{Z}$-factor of the wreath product
$\mathbb{Z}^{\Sigma} \wr \mathbb{Z}$.
In the following, we will freely switch between the description of elements of $\mathbb{Z}^{\Sigma} \wr \mathbb{Z}$
by words over $(\Sigma \cup \{a\})^{\pm 1}$ and by pairs from
$K \rtimes \mathbb{Z}$.
For a finite-support mapping $f \in K$,
$m \in \mathbb{Z}$, and $x \in \Sigma$, we also write $f(m,x)$ for the integer $f(m)(x)$.

Our  finitely generated submonoid $M$ of $\mathbb{Z}^{\Sigma} \wr
\mathbb{Z}$ is generated by the following elements. The right
column shows the generators in list notation, where elements of the free abelian group $\mathbb{Z}^{\Sigma}$
are written additively, i.e., as $\mathbb{Z}$-linear combinations of elements of $\Sigma$:
\begin{alignat}{2}
  & p^{-1} a \# a^2 \# a q \ \text{for} \ (p,=0,q) \in \delta  &&   [\overset{\shpos}{-p}, \#, 0, \#, \overset{\shneg}{q}] \label{generator=0} \\
  & p^{-1} a \# a c  a^2 q a^{-2} \ \text{for} \ (p,+1,q) \in \delta && [\overset{\shpos}{-p}, \#,  \overset{\shneg}{c}, 0, q] \label{generator+1} \\
  & p^{-1} a \# a^3 q a^6  c^{-1} a^{-8} \ \text{for} \ (p,-1,q) \in  \delta  \qquad && [\overset{\shpos}{-p}, \#,  \overset{\shneg}{0}, 0,q,0,0,0,0,0,-c] \label{generator-1} \\
  & c^{-1}  a^8 c a^{-8}    && [\overset{\shpos\shneg}{-c}, 0, 0, 0, 0, 0, 0, 0, c] \label{generator-copy1} \\
  & c^{-1}  a \# a^7 c a^{-6}  && [\overset{\shpos}{-c}, \#, \overset{\shneg}{0}, 0, 0, 0, 0, 0, c] \label{generator-copy2}  \\
  & q_f^{-1} a^{-1}  && [\overset{\shneg}{0},\overset{\shpos}{-q_f}] \label{generator-terminate} \\
  & \#^{-1} a^{-2} && [\overset{\shneg}{0},0,\overset{\shpos}{-\#}]\label{generator-go-left}
\end{alignat}
For initial counter values $m,n \in \mathbb{N}$ let
\[I(m,n) =  a q_0  a^2  c^m a^4  c^n a^{-6}.\]
The list notation for $I(m,n)$ is
\begin{equation} \label{list-initial}
[\overset{\shpos}{0}, \overset{\shneg}{q_0}, 0, m \cdot c, 0, 0,0,n \cdot c] .
\end{equation}
Here is some intuition:
The group element $I(m,n)$ represents the initial configuration $(q_0,m,n)$ of  the 2-counter machine
$C$. Lemma~\ref{lemma-main-undec} below states that $(q_0,m,n)  \to^*_C (q_f,0,0)$ is equivalent to the existence
of $Y \in M$ with $I(m,n) Y = 1$, i.e., $I(m,n)^{-1} \in M$.
Generators of type \eqref{generator=0}--\eqref{generator-copy2} simulate the 2-counter machine
$C$. States of $C$ will be stored at cursor positions $4k+1$. The values of the first (resp., second) counter will
be stored at cursor positions $8k+3$ (resp., $8k+7$).
Note that $I(m,n)$ puts a single copy of the symbol $q_0 \in  \Sigma$
at position $1$, $m$ copies of symbol $c$ (which represents counter
values) at position 3, and $n$ copies of symbol $c$ at position 7.
Hence, indeed, $I(m,n)$ sets up the initial configuration $(q_0,m,n)$
for $C$. Even cursor positions will carry the special
symbol $\#$. Note that generator \eqref{generator-terminate} is the only generator which changes the
cursor position from even to odd or vice versa. It will turn out that if $I(m,n) Y = 1$ ($Y \in M$),
then generator \eqref{generator-terminate} has to occur exactly once in $Y$; it terminates the simulation
of the 2-counter machine $C$. Hence, $Y$ can be written as $Y = U (q_f^{-1} a^{-1}) V$ with $U,V \in M$.
Moreover, it turns out that $U \in M$ is a product of generators \eqref{generator=0}--\eqref{generator-copy2},
which simulate $C$. Thereby, even cursor positions will be marked with
a single occurrence of the special symbol $\#$.
In a second phase, that corresponds to $V \in M$, these special symbols $\#$ will be removed again
and the cursor will be moved left to position $0$. This is accomplished with generator \eqref{generator-go-left}.
In fact, our construction enforces that $V$ is a power of \eqref{generator-go-left}.

During the simulation phase (corresponding to $U \in M$), generators of type \eqref{generator=0}
implement zero tests, whereas generators of type \eqref{generator+1} (resp., \eqref{generator-1})
increment (resp., decrement) a counter.
Finally, \eqref{generator-copy1} and \eqref{generator-copy2} copy the counter value to the next
cursor position that is reserved for the counter (that is copied). During such a copy phase,
\eqref{generator-copy1} is first applied $\geq 0$ many times. Finally,    \eqref{generator-copy2}
is applied exactly once.

\begin{lemma}  \label{lemma-main-undec}
For all $m,n \in \mathbb{N}$ the following are equivalent:
\begin{itemize}
\item $(q_0,m,n)  \to^*_C (q_f,0,0)$
\item There exists $Y \in M$ such that $I(m,n) Y = 1$.
\end{itemize}
\end{lemma}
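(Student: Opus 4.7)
The plan is to prove the two implications separately, in each case by tracking how products of generators evolve the pair (cursor position, $\Sigma$-labelled tape) in $\mathbb{Z}^\Sigma \wr \mathbb{Z}$.

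For the $(\Rightarrow)$ direction, suppose $(q_0,m,n) = (p_0,a_0,b_0) \to_C \cdots \to_C (p_N,a_N,b_N) = (q_f,0,0)$. I would construct $Y = Y_1 Y_2 \cdots Y_N \cdot (q_f^{-1} a^{-1}) \cdot (\#^{-1} a^{-2})^N$ step by step: for a zero-test transition, take $Y_i$ to be the corresponding generator \eqref{generator=0}; for a $\pm 1$ transition, take $Y_i$ to be the corresponding generator from \eqref{generator+1} or \eqref{generator-1}, followed by $\eqref{generator-copy1}^{k_i} \cdot \eqref{generator-copy2}$, with $k_i$ chosen so that all $c$'s representing the updated active counter are shifted $8$ positions to the right. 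I would then verify by induction on $i$ that $I(m,n) Y_1 \cdots Y_i$ is the element of $\mathbb{Z}^\Sigma \wr \mathbb{Z}$ whose cursor sits at $4i+1$, carrying state symbol $p_i$ at that position, counter values $a_i$ and $b_i$ at the two positions of distance $2$ and $6$ from the cursor (with the assignment alternating with $i$ as the machine alternates counters), and a single $\#$ at every even position in $[2,4i]$. The factor $q_f^{-1} a^{-1}$ then consumes $q_f$ and moves the cursor from $4N+1$ to $4N$, and the $N$ applications of $\#^{-1} a^{-2}$ sweep leftwards, consuming the $\#$-symbols one by one and landing at the identity.

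For the $(\Leftarrow)$ direction, assume $Y \in M$ satisfies $I(m,n) Y = 1$. The cursor shift of $I(m,n)$ is $+1$, and among the generators \eqref{generator=0}--\eqref{generator-go-left} only \eqref{generator-terminate} has odd cursor shift; so \eqref{generator-terminate} appears in $Y$ an odd number of times. I would first argue that it appears exactly once, splitting $Y = U \cdot (q_f^{-1} a^{-1}) \cdot V$ with the cursor odd throughout $U$ and even throughout $V$. Since the only generator preserving even cursor parity and capable of cancelling even-position $\#$-symbols is \eqref{generator-go-left}, the suffix $V$ must be a power of $\#^{-1} a^{-2}$. The heart of the argument is to parse $U$ as a concatenation of \emph{simulation blocks} of the form \eqref{generator=0}, or $(\eqref{generator+1} \text{ or } \eqref{generator-1}) \cdot \eqref{generator-copy1}^{*} \cdot \eqref{generator-copy2}$, each of total cursor shift $+4$, and to show that each such block is forced to correspond to a legal transition of $C$: the state symbol at the current cursor position uniquely determines the left-exponent $p^{-1}$ of the block's first generator, and non-negativity of the final $c$-counts at counter positions forces the precise number of copy generators in each block. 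Induction on the number of blocks then extracts a computation $(q_0,m,n) \to_C^* (q_f,0,0)$.

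The main obstacle will be the rigidity argument for $U$. The generators of $M$ do not commute, so a priori one can imagine exotic interleavings—mixing copy generators belonging to different blocks, applying a simulation generator at a cursor that does not sit on the expected state symbol, or firing a copy generator at an odd cursor position unrelated to any ongoing block. The plan is to exclude these by a position-by-position analysis of the final tape, exploiting that each generator has support at fixed relative offsets from its firing cursor, that $\#$-symbols are created only at even positions (by the rightward-advancing generators \eqref{generator=0}--\eqref{generator-copy2}) and removed only by \eqref{generator-go-left}, and that $c$-symbols migrate rightwards by exactly $+8$ per application of \eqref{generator-copy1} or \eqref{generator-copy2}. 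Any mistimed or misplaced firing therefore deposits a $\Sigma$-symbol at a position from which no later generator of $M$ can cancel it, contradicting $I(m,n) Y = 1$ and forcing $U$ to be exactly the sequence of well-formed blocks produced by some halting computation of $C$.
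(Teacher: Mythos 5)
Your proposal is correct in outline and takes essentially the same route as the paper: the same explicit construction for the easy direction (except that you need $(\#^{-1}a^{-2})^{2N}$ rather than $(\#^{-1}a^{-2})^{N}$, since each simulated transition deposits two $\#$-symbols), and for the hard direction the same decomposition $Y=U\cdot(q_f^{-1}a^{-1})\cdot V$ obtained from cursor parity, followed by the same block-parsing rigidity argument for $U$ based on uncancellable deposits. The two steps you flag as needing work are handled in the paper exactly along the lines you indicate: \emph{exactly one} occurrence of \eqref{generator-terminate} follows from the monotone, parity-separated count of state symbols (which only that generator can decrease), and the rigidity of $U$ and $V$ rests on the invariants that state symbols and $c$ never appear at even positions and $\#$ never at odd positions, which force every generator fired at an even cursor position to be \eqref{generator-go-left}.
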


\begin{proof}
Assume first that $I(m,n) Y = 1$ for some $Y \in M$.
We have to show that $(q_0,m,n)  \to^*_C (q_f,0,0)$; this is the more
difficult direction.
Let
\[
Y = y_1 \cdots y_k,
\]
where each $y_i$ is one of the generators of $M$.
For $0 \leq i \leq k$ let
\[
 Y_i =  y_1 \cdots y_i
\]
(thus, $Y_0 = 1$)
and assume that
\[I(m,n) Y_i = (f_i,m_i) \in K \rtimes \mathbb{Z}.\]
Hence, $f_k = 0$ is the zero-mapping and $m_k=0$.
Moreover $(f_0,m_0) = I(m,n)$.

\medskip
\noindent
{\em Claim 1.} For all $0 \leq i \leq k$, $q \in Q$, and $\ell \in \mathbb{Z}$
we have
$f_i(2\ell,q) = 0$.

\medskip
\noindent
{\em Proof of Claim 1.}  Assume that $f_i(2\ell,q) \neq 0$ for some $0 \leq i \leq k$, $q \in Q$, and $\ell \in \mathbb{Z}$.
Choose $0 \leq i \leq k$ minimal such that there exist $q \in Q$ and $\ell \in \mathbb{Z}$
with $f_i(2\ell,q) \neq 0$. Since $f_0(2\ell,q)=0$ for all $q \in Q$ and $\ell \in \mathbb{Z}$ (the list notation for 
$(f_0,m_0)$ is \eqref{list-initial}),
we must have $i \geq 1$. Hence,
$f_{i-1}(2\ell,q) = 0$  for all $q \in Q$ and $\ell \in \mathbb{Z}$.
An inspection of the generators shows that
if $m_{i-1}$ were odd, we would also have $f_i(2\ell,q) = 0$  for all $q \in Q$ and $\ell \in \mathbb{Z}$.
Therefore, $m_{i-1}$ must be even.
An inspection of the generators
of $M$ shows that there exist $j \in \mathbb{Z}$ and $p \in Q$ such that
\[
f_i(2j,p) < 0 \text{ and } f_i(2j',p') = 0 \text{ for all $j' < j$ and $p' \in Q$.}
\]
But then, for all $i \leq i' \leq k$ there exist
$j \in \mathbb{Z}$ and $p \in Q$ such that
\[
f_{i'}(2j,p) < 0 \text{ and } f_{i'}(2j',p') = 0 \text{ for all $j' < j$ and $p' \in Q$.}
\]
For $i'=k$ we obtain a contradiction, since $f_k = 0$.

\medskip
\noindent
Claim 1 implies that for all $1 \leq i \leq k$ with $m_{i-1}$ even, the generator
$y_i$ cannot be of type \eqref{generator=0}, \eqref{generator+1}, \eqref{generator-1}, or
\eqref{generator-terminate}.

\medskip
\noindent
{\em Claim 2.} For all $0 \leq i \leq k$ and $\ell \in \mathbb{Z}$
we have
$f_i(2\ell,c) = 0$.

\medskip
\noindent
{\em Proof of Claim 2.}  Assume that $f_i(2\ell,c) \neq 0$ for some $0 \leq i \leq k$ and $\ell \in \mathbb{Z}$.
Choose $0 \leq i \leq k$ minimal such that there exists  $\ell \in \mathbb{Z}$
with $f_i(2\ell,c) \neq 0$. Since $f_0(2\ell,c)=0$ for all $\ell \in \mathbb{Z}$,
we must have $i \geq 1$.  Hence,
$f_{i-1}(2\ell,c) = 0$  for all $\ell \in \mathbb{Z}$.
An inspection of the generators shows that
if $m_{i-1}$ were odd, we would also have $f_i(2\ell,c) = 0$  for all $\ell \in \mathbb{Z}$.
Therefore,  $m_{i-1}$ must be even.
The generator $y_i$ must be of one of the types
\eqref{generator+1}, \eqref{generator-1},  \eqref{generator-copy1}, or \eqref{generator-copy2}.
But the types   \eqref{generator+1} and \eqref{generator-1} are excluded by the remark
before Claim~2.
Therefore, $y_i$ must be either \eqref{generator-copy1} or \eqref{generator-copy2}.
Thus,  there exists $j \in \mathbb{Z}$  such that
\[
f_i(2j,c) < 0 \text{ and } f_i(2j',c) = 0 \text{ for all $j' < j$.}
\]
Note that for all $i < i' \leq k$ with $m_{i'-1}$ even, the generator $y_{i'}$
is not of  type  \eqref{generator+1} (again by the remark before Claim~2).
This implies that for all $i \leq i' \leq k$ there exists
$j \in \mathbb{Z}$  such that
\[
f_{i'}(2j,c) < 0 \text{ and } f_{i'}(2j',c) = 0 \text{ for all $j' < j$.}
\]
For $i'=k$ we obtain a contradiction, since $f_k = 0$.

\medskip
\noindent
Claim 1 and 2 imply that for all $1 \leq i \leq k$ with $m_{i-1}$ even, the generator
$y_i$ is 
\eqref{generator-go-left}.

\medskip
\noindent
{\em Claim 3.} For all $0 \leq i \leq k$ and $\ell \in \mathbb{Z}$
we have
$f_i(2\ell+1,\#) = 0$.

\medskip
\noindent
{\em Proof of Claim 3.}  Assume that $f_i(2\ell+1,\#) \neq 0$ for some $0 \leq i \leq k$ and $\ell \in \mathbb{Z}$.
Choose $0 \leq i \leq k$ minimal such that there exists  $\ell \in \mathbb{Z}$
with $f_i(2\ell+1,\#) \neq 0$. Since $f_0(\ell,\#)=0$ for all $\ell \in \mathbb{Z}$,
we must have $i \geq 1$.  Hence,
$f_{i-1}(2\ell+1,\#) = 0$  for all $\ell \in \mathbb{Z}$.
There are two possible cases:
\begin{enumerate}
\item $m_{i-1}$ is odd and $y_i$ is the generator \eqref{generator-go-left}.
\item $m_{i-1}$ is even and $y_i$ is a generator of type
  \eqref{generator=0}--\eqref{generator-1}
 or \eqref{generator-copy2}.
\end{enumerate}
But the second case is not possible by the remark before Claim 3.
Hence, $m_{i-1}$ is odd and $y_i$ is the generator \eqref{generator-go-left}.
Thus, there exists $j \in \mathbb{Z}$ with $f_i(2j+1,\#) < 0$.
Since for every $i \leq i' \leq k$ with $m_{i'-1}$ even, the generator $y_{i'}$
can only be of type  \eqref{generator-go-left}
(again by the remark before Claim~3), it follows
that for every $i \leq i' \leq k$ we have
$f_{i'}(2j+1,\#) < 0$.
For $i'=k$ we obtain a contradiction, since $f_k = 0$.

\medskip
\noindent
{\em Claim 4.} There is exactly one $1 \leq i \leq k$ such that $y_i$ is the generator
\eqref{generator-terminate}.

\medskip
\noindent
{\em Proof of Claim 4.}
For $g = (f,m) \in  \mathbb{Z}^{\Sigma} \wr \mathbb{Z}$  and $b \in \{0,1\}$ we define
$$
\sigma_Q(g,b) = \sum_{k \in \mathbb{Z}} \sum_{q \in Q} f(2k+b,q) .
$$
An inspection of all generators of $M$ shows that for every $g \in  \mathbb{Z}^{\Sigma} \wr \mathbb{Z}$ and
every generator $z$ of $M$  we have:
\begin{itemize}
\item
If $z$ is not the generator \eqref{generator-terminate}, then
$\sigma_Q(gz,b) =  \sigma_Q(g,b)$ for both $b=0$ and $b=1$.
\item
If $z$ is the generator \eqref{generator-terminate}, then there is $b \in \{0,1\}$
such that
$\sigma_Q(gz,b) =  \sigma_Q(g,b)-1$ and
$\sigma_Q(gz,1-b) =  \sigma_Q(g,1-b)$.
\end{itemize}
The claim follows, since $\sigma_Q(I(m,n),0)= \sigma_Q(I(m,n)Y,0)=
\sigma_Q(I(m,n)Y,1) = 0$ and $\sigma_Q(I(m,n),1)=1$.

\medskip
\noindent
By Claim 1--4, there exists a unique $1 \leq i \leq k$ such that
the following three properties hold:
\begin{itemize}
\item For every $1 \leq j < i$, $y_j$ is a generator of type \eqref{generator=0}--\eqref{generator-copy2}.
\item $y_i$ is the generator \eqref{generator-terminate}.
\item For every $i < j \leq k$, $y_j$ is the generator \eqref{generator-go-left}.
\end{itemize}
Hence, $I(m,n) Y_{i-1}$ must be of the form
$$
[\overset{\shpos}{0},0,\#,0,\#,0,\#, \ldots,0,\#,0,\#,\overset{\shneg}{q_f}],
$$
since only such an element can be reduced to 1 by right-multiplication
with generator \eqref{generator-terminate} followed by a positive
power of generator \eqref{generator-go-left}.
We show that this implies $(q_0,m,n)  \to^*_C (q_f,0,0)$.
Note that every generator of type  \eqref{generator=0}--\eqref{generator-copy2}
(those generators that occur in $Y_{i-1}$) moves the cursor $2d$ (for some $d \geq 0$)
to the right along the $\mathbb{Z}$-line.
This means that for every $0 \leq j \leq i-1$, $m_j$ is odd and moreover,
for every odd $m < m_j$, the group element $f_j(m) \in \mathbb{Z}^\Sigma$
is zero.

\medskip
\noindent
{\em Claim 5.}
Let $0 \leq j < i-1$  and assume that $I(m,n)Y_j$ is of the form
\begin{equation} \label{config-8k+1}
[\overset{\shpos}{0},0,\#,0,\#,0,\#,
\ldots,0,\#,0,\#,\overset{\shneg}{p},0,a \cdot c,0,0,0,b \cdot c],
\end{equation}
where
$p \in Q_0$, $a,b \in \mathbb{N}$, and $\overset{\shneg}{p}$ occurs
at position $\ell = 8k+1$ for some $k \geq 0$
(hence, \eqref{config-8k+1} represents the configuration $(p,a,b)$).
Then there exists $j' > j$ and a valid $C$-transition
$(p,a,b) \to_C (q,a',b')$ such that
$I(m,n)Y_{j'}$ is of the form
$$
[\overset{\shpos}{0},0,\#,0,\#,0,\#,
\ldots,0,\#,0,\#,\overset{\shneg}{q},0,b' \cdot c,0,0,0,a' \cdot c].
$$
Here $\overset{\shneg}{q}$ occurs at position $\ell+4$.

\medskip
\noindent
{\em Proof of Claim 5.}
Generator $y_{j+1}$ has to be of the form \eqref{generator=0},
\eqref{generator+1}, or  \eqref{generator-1}, because otherwise
we leave at position $\ell$ a negative copy of 
$c$, which cannot be compensated later.
Let us first assume that $y_{j+1}$ has the form \eqref{generator=0},
i.e., $(p,=0,q) \in \delta$. Then
$I(m,n)Y_{j+1}$ is of the form
\begin{equation} \label{claim-5-zero-test}
[\overset{\shpos}{0},0,\#,0,\#,0,\#,
\ldots,0,\#,a \cdot c,\#,\overset{\shneg}{q},0,b \cdot c,0,0,0,0],
\end{equation}
where $\overset{\shneg}{q}$ occurs at position $\ell+4$.
If $a > 0$, then the $a$ many $c$'s at position $\ell+2$
cannot be removed in the future. Hence, we must have
$a=0$.
Setting $a'=0$ and $b'=b$ shows that
\eqref{claim-5-zero-test} has the form required in the conclusion of Claim 5.

Next, assume that $y_{j+1}$ has the form \eqref{generator+1}. Hence
$(p,+1,q) \in \delta$ and
$I(m,n)Y_{j+1}$ is of the form
$$
[\overset{\shpos}{0},0,\#,0,\#,0,\#,
\ldots,0,\#,\overset{\shneg}{(a+1) \cdot c},0,q,0,b \cdot c,0,0,0,0],
$$
where $\overset{\shneg}{(a+1) \cdot c}$ occurs at position $\ell+2$.
So we have to remove $a+1$ many copies of $c$ from position $\ell+2$.
Hence, the only way to continue is to apply $a$ many times generator
\eqref{generator-copy1} followed by a single application of generator \eqref{generator-copy2}.
Hence, $I(m,n)Y_{j+a+2}$ must be of the form
\begin{equation} \label{claim-5-+1}
[\overset{\shpos}{0},0,\#,0,\#,0,\#,
\ldots,0,\#,0,\#,\overset{\shneg}{q},0,b \cdot c,0,0,0,(a+1)\cdot c],
\end{equation}
where $\overset{\shneg}{q}$ occurs at position $\ell+4$.
Setting $b'= b$ and $a'=a+1$ shows that
\eqref{claim-5-+1} has the form required in the conclusion of Claim 5.

Finally, assume that $y_{j+1}$ has the form \eqref{generator-1}, hence
$(p,-1,q) \in \delta$ and
$I(m,n)Y_{j+1}$ is of the form
$$
[\overset{\shpos}{0},0,\#,0,\#,0,\#,
\ldots,0,\#,\overset{\shneg}{a \cdot c},0,q,0,b \cdot c,0,0,0,-c],
$$
where $\overset{\shneg}{a \cdot c}$ occurs at position $\ell+2$.
First, assume that $a=0$.
Then there is no way to move the cursor to the right without leaving
a negative copy of a symbol from $Q \cup \{c\}$ at position $\ell+2$,
and this negative copy cannot be eliminated later. Hence, we must have
$a > 0$. Now, the only way to continue is to apply $a-1$ many times generator
\eqref{generator-copy1} followed by a single application of generator \eqref{generator-copy2}.
Hence, $I(m,n)Y_{j+a+1}$ must be of the form
\begin{equation} \label{claim-5--1}
[\overset{\shpos}{0},0,\#,0,\#,0,\#,
\ldots,0,\#,0,\#,\overset{\shneg}{q},0,b \cdot c,0,0,0,(a-1)\cdot c],
\end{equation}
where $\overset{\shneg}{q}$ occurs at position $\ell+4$.
Setting $b'= b$ and $a'=a-1$ shows that
\eqref{claim-5--1} has the form required in the conclusion of Claim 5.

This concludes the proof of Claim~5.
Completely analogously to Claim~5, one can show:

\medskip
\noindent
{\em Claim 6.}
Let $0 \leq j < i-1$  and assume that $I(m,n)Y_j$ is of the form
\begin{equation} \label{config-8k+5}
[\overset{\shpos}{0},0,\#,0,\#,0,\#,
\ldots,0,\#,0,\#,\overset{\shneg}{p},0,a \cdot c,0,0,0,b \cdot c],
\end{equation}
where
$p \in Q_1$, $a,b \in \mathbb{N}$, $\overset{\shneg}{p}$ occurs
at position $\ell = 8k+5$ for some $k \geq 0$
(hence, \eqref{config-8k+5} represents the configuration $(p,b,a)$).
Then there exists $j' > j$ and a valid $C$-transition
$(p,b,a) \to_C (q,b',a')$ such that
$I(m,n)Y_{j'}$ is of the form
$$
[\overset{\shpos}{0},0,\#,0,\#,0,\#,
\ldots,0,\#,0,\#,\overset{\shneg}{q},0,b' \cdot c,0,0,0,a' \cdot c].
$$
Here $\overset{\shneg}{q}$ occurs at position $\ell+4$.

\medskip
\noindent
Using Claim 5 and 6 we can now easily conclude that $(q_0,m,n) \to^*_C
(q_f,0,0)$ holds.

The other direction (if $(q_0,m,n)  \to^*_C (q_f,0,0)$ then there
exists $Y \in M$ with $I(m,n) Y = 1$) is easier.  A computation
$$
(q_0,m,n)  \to_C  (q_1,m_1,n_1) \to_C \cdots \to_C (q_{\ell-1},m_{\ell-1},n_{\ell-1}) \to_C  (q_f,0,0)
$$
can be directly translated into a sequence of $M$-generators $y_1 y_2 \cdots y_k$ such that the group element
$I(m,n) y_1 y_2 \cdots y_k$ has the form
$$
[\overset{\shpos}{0},0,\#,0,\#,0,\#, \ldots,0,\#,0,\#,\overset{\shneg}{q_f}],
$$
Multiplying this element with generator \eqref{generator-terminate} followed by a positive power
of generator \eqref{generator-go-left} yields the group identity.
\qed
\end{proof}
The following result is an immediate consequence of Theorem~\ref{thm-2-counter}
and Lemma~\ref{lemma-main-undec}.

\begin{theorem} \label{thm:undec1}
There is a
fixed finitely generated submonoid $M$ of the wreath product
$\mathbb{Z}^{\Sigma} \wr \mathbb{Z}$  with an undecidable
membership problem.
\end{theorem}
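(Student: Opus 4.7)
The plan is to set up a straightforward many-one reduction from the halting problem for the fixed $2$-counter machine $C = (Q, q_0, q_f, \delta)$ provided by Theorem~\ref{thm-2-counter} to the membership problem for the fixed submonoid $M \le \mathbb{Z}^{\Sigma} \wr \mathbb{Z}$ generated by the elements \eqref{generator=0}--\eqref{generator-go-left}. Since the generators do not depend on the input $(m,n)$, the submonoid $M$ is indeed fixed, so all that remains is to exhibit an effective reduction.

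First, I would observe that, given $m,n \in \mathbb{N}$ in unary (or binary) encoding, the element $I(m,n) = a q_0 a^2 c^m a^4 c^n a^{-6}$ can be computed effectively as a word over $(\Sigma \cup \{a\})^{\pm 1}$, and hence so can its inverse $I(m,n)^{-1}$. Next, I would invoke Lemma~\ref{lemma-main-undec}: the existence of $Y \in M$ with $I(m,n) Y = 1$ is equivalent to $I(m,n)^{-1} \in M$. Chaining the two equivalences gives
\[
(q_0, m, n) \to^*_C (q_f, 0, 0) \iff I(m,n)^{-1} \in M.
\]

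Consequently, any algorithm deciding membership in the fixed submonoid $M$ would, when applied to the input $I(m,n)^{-1}$, decide whether the machine $C$ halts starting from the configuration $(q_0, m, n)$; this contradicts Theorem~\ref{thm-2-counter}. Therefore the membership problem for $M$ is undecidable, proving the theorem.

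I do not anticipate any real obstacle: the technical content has already been absorbed into Lemma~\ref{lemma-main-undec}, whose proof encodes the simulation of $C$ by the generators of $M$ and the accompanying combinatorial bookkeeping about cursor positions and the symbols $q \in Q$, $c$, $\#$. The only thing to verify is that the map $(m,n) \mapsto I(m,n)^{-1}$ is computable, which is immediate from the explicit word expression for $I(m,n)$.
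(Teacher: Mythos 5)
Your proposal is correct and matches the paper exactly: the paper derives Theorem~\ref{thm:undec1} as an immediate consequence of Theorem~\ref{thm-2-counter} and Lemma~\ref{lemma-main-undec}, via precisely the reduction $(m,n)\mapsto I(m,n)^{-1}$ that you describe. Your added remark on the computability of $I(m,n)^{-1}$ is a harmless (and correct) elaboration.
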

Finally, we can establish the main result of this section.

\begin{theorem}
There is a
fixed finitely generated submonoid $M$ of the wreath product
$\mathbb{Z} \wr \mathbb{Z}$  with an undecidable
membership problem.
\end{theorem}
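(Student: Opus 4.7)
The plan is to reduce the statement to Theorem~\ref{thm:undec1} by embedding $\mathbb{Z}^{\Sigma} \wr \mathbb{Z}$ as a subgroup of $\mathbb{Z}\wr\mathbb{Z}$ via a position-spreading homomorphism. Writing $n = |\Sigma|$ and fixing an enumeration $\Sigma = \{s_0,\ldots,s_{n-1}\}$, I will define $\phi\colon \mathbb{Z}^{\Sigma} \wr \mathbb{Z} \to \mathbb{Z} \wr \mathbb{Z}$ by $\phi(f,m) = (f',nm)$, where $f'(nk+i) = f(k)(s_i)$ for $k\in\mathbb{Z}$ and $0\le i<n$. Intuitively, $\phi$ replaces each fiber $\mathbb{Z}^{\Sigma}$ sitting at cursor position $k$ by $n$ consecutive integer slots at positions $nk, nk+1, \ldots, nk+n-1$ holding the $\Sigma$-coordinates, and rescales the cursor accordingly; at the level of standard generators, this amounts to sending the right generator $a$ to $a^n$ and each $s_i\in\Sigma$ to $a^i\,t\,a^{-i}$, where $t$ generates the base copy of $\mathbb{Z}$ in $\mathbb{Z}\wr\mathbb{Z}$.

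The first step is to verify that $\phi$ is an injective group homomorphism. Injectivity is immediate, since the spreading loses no information: from $(f',nm)$ one recovers $m$ by dividing by $n$ and recovers $f$ by reading coordinates in blocks of $n$. For the homomorphism property, I will expand both $\phi\bigl((f_1,m_1)(f_2,m_2)\bigr)$ and $\phi(f_1,m_1)\,\phi(f_2,m_2)$ using the multiplication rule from Section~3 and check that the shift by $m_1$ in the base of $\mathbb{Z}^{\Sigma} \wr \mathbb{Z}$ corresponds under $\phi$ exactly to a shift by $nm_1$ in $\mathbb{Z}\wr\mathbb{Z}$; concretely, for $a = nk+i$ one has $(nm_1\cdot f_2')(a) = f_2'(n(k-m_1)+i) = f_2(k-m_1)(s_i)$, which is the $i$-th coordinate at position $k$ of the spread of $m_1\cdot f_2$. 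This bookkeeping-heavy computation is the only real obstacle in the argument; everything else is formal.

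With $\phi$ in hand, let $M'$ be the submonoid of $\mathbb{Z}\wr\mathbb{Z}$ generated by the image under $\phi$ of the finite generating set of the submonoid $M \subseteq \mathbb{Z}^{\Sigma} \wr \mathbb{Z}$ supplied by Theorem~\ref{thm:undec1}; this makes $M'$ finitely generated, and because $\phi$ is a homomorphism we have $M' = \phi(M)$. Since $\phi$ is injective, for every $g\in \mathbb{Z}^{\Sigma}\wr\mathbb{Z}$ one has $g\in M$ if and only if $\phi(g)\in M'$. The translation $g\mapsto \phi(g)$ is trivially computable at the level of generator words, so an algorithm deciding membership in $M'$ inside $\mathbb{Z}\wr\mathbb{Z}$ would decide membership in $M$ inside $\mathbb{Z}^{\Sigma}\wr\mathbb{Z}$; by Theorem~\ref{thm:undec1} this is impossible, and the theorem follows.
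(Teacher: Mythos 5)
Your proposal is correct and follows essentially the same route as the paper: reduce to Theorem~\ref{thm:undec1} by embedding $\mathbb{Z}^{\Sigma}\wr\mathbb{Z}$ into $\mathbb{Z}\wr\mathbb{Z}$ and pulling the undecidable submonoid through the (injective, computable) embedding. Your explicit position-spreading map $\phi$ is exactly the isomorphism $\mathbb{Z}^{\Sigma}\wr\mathbb{Z}\cong\mathbb{Z}^{|\Sigma|}\wr |\Sigma|\mathbb{Z}\le\mathbb{Z}\wr\mathbb{Z}$ that the paper obtains by citing Proposition~\ref{finiteindex} with $G=\mathbb{Z}$ and $K=|\Sigma|\mathbb{Z}$, so you have merely unfolded that proposition rather than taken a different path.
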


\begin{proof}
By Theorem~\ref{thm:undec1} it suffices to reduce the submonoid membership problem of
$\mathbb{Z}^{\Sigma} \wr \mathbb{Z}$ to the submonoid membership problem of
$\mathbb{Z} \wr \mathbb{Z}$.  If $m=|\Sigma|$, then Proposition~\ref{finiteindex} shows that $\mathbb{Z}^{\Sigma} \wr \mathbb{Z}\cong \mathbb Z^m\wr m\mathbb Z$ is isomorphic to a subgroup of index $m$ in $\mathbb Z\wr \mathbb Z$.  So if $\mathbb Z\wr \mathbb Z$ had decidable submonoid membership for each finitely generated submonoid, then the same would be true of $\mathbb{Z}^{\Sigma} \wr \mathbb{Z}$.
\qed
\end{proof}

We remark that, together with the undecidability of the rational subset
membership problem for groups $H\wr
(\mathbb{Z}\times\mathbb{Z})$ for non-trivial $H$~\cite{LohSt09tocs}, our results imply the
following: For finitely generated non-trivial abelian groups $G$ and $H$, the wreath
product $H\wr G$ has a decidable rational subset membership problem if and only if 
(i) $G$ is finite\footnote{If $G$ has size $m$, then by Proposition~\ref{finiteindex},
$H^m \cong H^m \wr 1$ is isomorphic to a subgroup of index $m$ in $H \wr G$.
Since $H^m$ is finitely generated abelian, decidability of the rational subset membership 
problem of $H \wr G$ follows from the fact that decidability is preserved by finite extensions
\cite{Gru99,KaSiSt06}.} or 
(ii) ($G$ has rank 1 and $H$ is finite).
Furthermore, for virtually free groups $G$ and $H$, the rational subset
membership problem is decidable for $H\wr G$ if and only if (i) $G$ is trivial or
(ii) $H$ is finite, or (iii) ($G$ is finite and $H$ is virtually abelian).

By \cite{Clea06}, the wreath product $\mathbb{Z} \wr \mathbb{Z}$ is a subgroup of Thompson's group $F$
as well as of Baumslag's finitely presented metabelian group 
$\langle a, s, t \mid [s,t]=[a^t, a]=1, a^s = aa^t \rangle$. Hence, we get:

\begin{corollary}
Thompson's group $F$ as well as Baumslag's finitely presented metabelian group 
both contain finitely generated submonoids with an undecidable membership problem.
\end{corollary}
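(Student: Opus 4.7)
The plan is to obtain the corollary as a direct consequence of the preceding theorem by transporting the fixed submonoid $M \subseteq \mathbb{Z} \wr \mathbb{Z}$ along the embeddings supplied by \cite{Clea06}. First, I would fix an injective homomorphism $\varphi\colon \mathbb{Z}\wr\mathbb{Z} \hookrightarrow G$, where $G$ denotes either Thompson's group $F$ or Baumslag's finitely presented metabelian group; in each case \cite{Clea06} provides an explicit embedding, specified by the images in $G$ of a finite generating set of $\mathbb{Z}\wr\mathbb{Z}$. The image $\varphi(M)$ is then a finitely generated submonoid of $G$, generated by the $\varphi$-images of the (finitely many) generators of $M$.

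Next, I would reduce the submonoid membership problem for $M$ inside $\mathbb{Z}\wr\mathbb{Z}$, which is undecidable by the preceding theorem, to the membership problem for $\varphi(M)$ inside $G$. The reduction takes an input $w \in \mathbb{Z}\wr\mathbb{Z}$, given as a word over the standard generators of $\mathbb{Z}\wr\mathbb{Z}$, and outputs the word $\varphi(w) \in G$ obtained by substituting the $\varphi$-images of those generators. Since $\varphi$ is an injective homomorphism, we have $w \in M$ if and only if $\varphi(w) \in \varphi(M)$, and both directions transfer undecidability in the desired direction.

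The only thing to check is that the reduction is effective, i.e., that $\varphi$ can be computed on inputs presented as words. This is immediate because the embeddings of \cite{Clea06} are described by specifying the images of a fixed finite set of generators of $\mathbb{Z}\wr\mathbb{Z}$ as explicit words in the generators of $G$, so the word homomorphism extending them is computable. There is no real obstacle: the corollary is a one-line consequence of the previous theorem once the embeddings of \cite{Clea06} are quoted.
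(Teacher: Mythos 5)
Your proposal is correct and is exactly the argument the paper intends: it derives the corollary immediately from the preceding theorem by transporting the fixed submonoid $M\subseteq\mathbb{Z}\wr\mathbb{Z}$ along the embeddings of $\mathbb{Z}\wr\mathbb{Z}$ into Thompson's group $F$ and Baumslag's group given in \cite{Clea06}. Your additional remark on the effectiveness of the induced word map is a sensible (if routine) elaboration of what the paper leaves implicit.
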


\section{Open problems}

As already mentioned in the introduction, we conjecture that
the rational subset membership problem for a wreath product
$H \wr G$ with $H$ non-trivial and $G$ not virtually free is
undecidable. Another interesting
case, which is not resolved by our results, concerns wreath products
$G \wr V$ with $V$ virtually free and $G$ a finitely generated
infinite torsion group.
Finally, all these questions can be also asked for the
submonoid membership problem. We do not know any example of
a group with decidable submonoid membership problem but undecidable
rational subset membership problem. If such a group exists, it must be
one-ended \cite{LohSt10}.


\def\cprime{$'$}

\end{document}